\documentclass{amsart}
\usepackage{physics}
\usepackage{hyperref}

\newtheorem{theorem}{Theorem}[section]
\newtheorem{lemma}[theorem]{Lemma}
\newtheorem{proposition}[theorem]{Proposition}

\theoremstyle{definition}

\newtheorem{example}[theorem]{Example}

\theoremstyle{remark}
\newtheorem{remark}[theorem]{Remark}

\numberwithin{equation}{section}

\begin{document}

\title[Thickness via partial differential equations]{On a calculation method of the thickness via partial differential equations}

\author{Atsushi~Nakayasu}
\address{Graduate School of Engineering, The University of Tokyo, Yayoi 2-11-16, Bunkyo-ku, Tokyo 113-8656, Japan}
\email{ankys@g.ecc.u-tokyo.ac.jp}

\author{Takayuki~Yamada}
\address{Graduate School of Engineering, The University of Tokyo, Yayoi 2-11-16, Bunkyo-ku, Tokyo 113-8656, Japan}
\email{t.yamada@mech.t.u-tokyo.ac.jp}

\keywords{thickness, elliptic equation, rate of convergence, maximum principle, interior $H^1$ estimate, modified Bessel functions}
\subjclass[2020]{Primary 35J25; Secondary 41A25, 34B30}

\date{\today}



\begin{abstract}
This paper presents a mathematical analysis of an elliptic partial differential equation (PDE) designed to compute the geometric thickness of a given shape.
The PDE-based formulation provides a direct and systematic approach to evaluate thickness through the elliptic equation, whose solution yields a vector field from which the thickness is extracted as the divergence.
While the convergence of this PDE-based thickness to the geometric thickness had been rigorously justified only for simple geometries such as intervals and straight bands, its validity for more general shapes remained open.
In this work, we extend the analysis to annular domains, where curvature effects are nontrivial.
We prove that the PDE-based thickness converges to the geometric thickness as the diffusion parameter tends to zero
by estimating the difference between two notions of thickness with the square root of the diffusion parameter.
Explicit expressions involving modified Bessel functions are obtained for annuli, together with sharp inequalities for their ratios.
These results provide a rigorous mathematical foundation for the PDE-based thickness
and demonstrate its potential as a reliable tool in shape analysis and topology optimization.
\end{abstract}

\maketitle


\section{Introduction}

A novel partial differential equation (PDE)-based framework has recently been introduced for extracting geometric features of shapes, including thickness, orientation, and skeleton \cite{r_y19}.
In this formulation, a vector field is obtained as a solution of a linear elliptic PDE posed on an extended domain,
and the thickness is extracted as the divergence of this field.
A distinguishing feature of this approach is that it is expected to compute the thickness itself.
This provides a direct PDE characterization of a geometric notion of thickness that has traditionally been defined only in purely geometric terms.
The formulation is therefore attractive from both theoretical and computational viewpoints, as it enables the use of analytical tools from elliptic PDE theory as well as standard finite element implementations.

Nevertheless, the PDE-based thickness is not a priori identical to the intuitive geometric thickness.
The solution of the elliptic equation depends not only on the shape domain but also on the surrounding fictitious domain and on a diffusion parameter.
Hence, two fundamental questions arise: whether the PDE-based thickness converges to the geometric thickness in the singular limit of vanishing diffusion, and how the approximation error behaves for finite parameters.
Establishing these properties is indispensable for justifying the PDE-based thickness as a reliable model.

An initial analysis was given in the authors' previous work \cite{r_ny25},
where the elliptic equation was studied for one-dimensional intervals and infinite straight bands of constant thickness.
By constructing explicit solutions and employing the maximum principle and interior $H^1$ estimates, it was shown that the PDE thickness converges to the geometric thickness as the diffusion parameter tends to zero.
These results provided a first rigorous validation of the method, but were limited to relatively simple geometries.

From the viewpoint of applications, a mathematically consistent notion of thickness is highly relevant to topology optimization, where minimum and maximum thickness constraints are indispensable for manufacturability.
Classical methods, such as signed distance function method or projection techniques (see Allaire et al.\ \cite{r_ajm16}, Carroll--Guest \cite{r_cg22}), act only indirectly on thickness.
In contrast, the present method offers the possibility of incorporating thickness itself into optimization models in a direct and PDE-consistent manner \cite{r_soy22}, \cite{r_y19b}.
A rigorous justification of this approach is thus not only of mathematical interest but also of practical importance in design optimization.

The aim of the present paper is to extend the analytical foundation of the elliptic equation beyond the simple cases treated in \cite{r_ny25}.
We investigate prototypical but nontrivial geometries, including bands and annuli, where explicit solutions or sharp estimates can be obtained.
For these settings we prove that the PDE-based thickness converges to the geometric thickness almost everywhere as the diffusion parameter tends to zero.
Moreover, we derive quantitative error estimates that characterize the rate of convergence with the square root of the diffusion parameter and additional exponentially small terms depending on the geometry.
In the annulus case, the analysis involves explicit representations in terms of modified Bessel functions and sharp inequalities for their ratios.
These results provide a broader and deeper understanding of the elliptic equation and PDE-based thickness, reinforcing its role as a mathematically justified and practically applicable tool for computing thickness in shape analysis and topology optimization.

\section{Preliminaries}

Let $\Omega$ be a domain (connected open set) representing the shape to be analyzed in the Euclidean space $\mathbb{R}^N$.
The \emph{(local) thickness} $\bar{T}_\Omega$ is a scalar field on $\Omega$ defined as the diameter of the largest inscribed ball at each point, that is,
\[
\bar{T}_\Omega(\vb*{x}) := \sup\{ \text{The diameter of the closed ball $B$} \mid \vb*{x} \in B \subset \Omega \}.
\]

\begin{example}[Interval shape]
For example, the thickness of an interval
\[
\Omega = (f_l, f_r)
\]
in one dimension with $f_l < f_r$ is the constant $\bar{T}_\Omega = f_r-f_l$.
\end{example}

\begin{example}[Band shape]
Similarly, the thickness of a band
\[
\Omega = \mathbb{R}\times(f_l, f_r)
\]
in two dimension with $f_l < f_r$ is the constant $\bar{T}_\Omega = f_r-f_l$.
\end{example}

\begin{example}[Annulus shape]
Even for an annulus
\[
\Omega = \{ (x, y) \in \mathbb{R}^2 \mid f_l^2 < x^2+y^2 < f_r^2 \}
\]
in two dimension with $0 < f_l < f_r$,
the thickness is the constant $\bar{T}_\Omega = f_r-f_l$.
\end{example}

In order to define the PDE-based thickness, prepare another domain $D$ in $\mathbb{R}^N$ to formulate the elliptic equation such that $\overline{\Omega} \subset D$ and the boundary $\partial D$ is $C^1$.
We call $D\setminus\overline{\Omega}$ the \emph{void domain}
while $\Omega$ is called the \emph{shape domain}.
The boundary between the shape domain and the void domain is denoted by $\Gamma := \partial\Omega$.
To distinguish between the shape domain and the void domain, define the characteristic function $\chi_\Omega \in L^\infty(D)$ by
\[
\chi_\Omega (x) =
\begin{cases}
1 & \text{if $x \in \Omega$,} \\
0 & \text{if $x \notin \Omega$.}
\end{cases}
\]

The elliptic equation to be analyzed in this paper is a vector-valued linear partial differential equation of the form
\begin{equation}
\label{e_eq}
\begin{cases}
-a \Delta \vb*{s}+(1-\chi_\Omega) \vb*{s} = -\nabla \chi_\Omega \quad \text{in $D$,} \\
\vb*{s} = \vb*{0} \quad \text{on $\partial D$.}
\end{cases}
\end{equation}
Here, $a$ is a fixed positive constant parameter for regularizing the solution $\vb*{s}$, and the case of $a \to 0$ is of most interest.
The solution $\vb*{s}$ is a state variable for extracting shape features, which is an $N$-dimensional vector field.
For example, it is known that $\vb*{s}$ converges to the normal vector when $a \to 0$ \cite{r_hktmy20}.
Another example is that a simplified equation is to be used for calculating the signed distance functions \cite{r_hmosy24}.

More precisely, we understand the equation in a weak sense.
Let $H^1(D)$ be the set of scalar fields $s(\vb*{x}) = s(x_1, \cdots, x_N)$ on $D$ such that $s$ itself and each partial derivative $s_{x_1}, \dots, s_{x_N}$ are an $L^2$ functions,
and $H^1(D)^N$ be the set of vector fields $\vb*{s}(\vb*{x}) = (s^1(\vb*{x}), \cdots, s^N(\vb*{x}))$ whose each component $s^1, \cdots, s^N$ belongs to $H^1(D)$.
Note that $H^1(D)$ and $H^1(D)^N$ are Hilbert spaces with the inner products defined by
\[
(s, u)_a := a \int_D \nabla s\cdot\nabla u+\int_{D\setminus\Omega} s u
\quad \text{for $s, u \in H^1(D)$,}
\]
\[
(\vb*{s}, \vb*{u})_a := a \int_D \nabla\vb*{s}\colon\nabla\vb*{u}+\int_{D\setminus\Omega} \vb*{s}\cdot\vb*{u}
\quad \text{for $\vb*{s}, \vb*{u} \in H^1(D)^N$,}
\]
respectively.
Here, $\nabla\vb*{s}\colon\nabla\vb*{u}$ is the Frobenius inner product of two matrices $\nabla\vb*{s}$ and $\nabla\vb*{u}$, i.e., $\nabla\vb*{s}\colon\nabla\vb*{u} = \sum_{i, j=1}^N s_{x_j}^i u_{x_j}^i$.
Let $H_0^1(D)$ be the subspace of $H^1(D)$ spanned by the infinitely differentiable functions with compact support in $D$,
and $H_0^1(D)^N$ be the set of vector fields whose each component belongs to $H_0^1(D)$.
Note that $H^1_0(\mathbb{R}^N) = H^1(\mathbb{R}^N)$.

Now, the weak form of the equation \eqref{e_eq} is to find $\vb*{s} \in H_0^1(D)^N$ such that
\begin{equation}
\label{e_wf}
a \int_D \nabla\vb*{s}:\nabla\vb*{u}+\int_{D\setminus\Omega} \vb*{s}\cdot\vb*{u} = \int_\Omega \operatorname{div}\vb*{u}
\quad \forall \vb*{u} \in H_0^1(D)^N.
\end{equation}
We remark that there is a unique (weak) solution $\vb*{s} \in H_0^1(D)^N$ to \eqref{e_wf}
provided that $\Omega$ has a finite area.
Indeed, one can apply the Lax-Milgram theorem thanks to the estimate
\[
\begin{aligned}
\abs{\int_\Omega \operatorname{div}\vb*{u}}
&\le \int_\Omega \abs{\operatorname{div}\vb*{u}}
\le \norm{\operatorname{div}\vb*{u}}_{L^2(\Omega)} \norm{1}_{L^2(\Omega)} \\
&\le \sqrt{N} \norm{1}_{L^2(\Omega)} \norm{\nabla\vb*{u}}_{L^2(\Omega)} \\
&\le \sqrt{\frac{N\abs{\Omega}}{a}} \norm{\vb*{u}}_a,
\end{aligned}
\]
where $\norm{\vb*{u}}_a := \sqrt{(\vb*{u}, \vb*{u})_a}$ and $\abs{\Omega}$ is the area of the shape domain $\Omega$.

Also note that the right-hand side can be further calculated,
and letting $\vb*{n}$ is the outward unit normal vector on the boundary of $\Omega$,
we have the identity
\[
\int_\Omega \operatorname{div}\vb*{u} = \int_\Gamma \vb*{u}\cdot\vb*{n}.
\]
We will use this form to quadrature the exact solution of the elliptic equation \eqref{e_wf} in some cases.

Define the \emph{PDE-based thickness} $T_{D, \Omega}^a$ on $\Omega$ as the limit of
\[
T_{D, \Omega}^a = \frac{2}{\sqrt{a} \operatorname{div}\vb*{s}}.
\]
We remark that the PDE-based thickness might contain the singular points where $\operatorname{div}\vb*{s} = 0$.
To avoid such a problem, we mainly consider the inverse of the PDE-based thickness, i.e.,
\[
\frac{1}{T_{D, \Omega}^a} = \frac{\sqrt{a}}{2} \operatorname{div}\vb*{s},
\]
which is guaranteed to be an $L^2$ function on $\Omega$.

The equation \eqref{e_wf} is non-homogeneous because the term on the right-hand side exists.
A \emph{homogeneous equation} has no terms on the right-hand side, which is written as
\begin{equation}
\label{e_hswf}
a \int_D \nabla\vb*{d}:\nabla\vb*{u}+\int_{D\setminus\Omega} \vb*{d}\cdot\vb*{u} = 0
\quad \forall \vb*{u} \in H_0^1(D)^N.
\end{equation}
The difference $\vb*{d} = \vb*{s}_2-\vb*{s}_1 \in H^1(D)^N$ between two solutions $\vb*{s}_1, \vb*{s}_2 \in H^1(D)^N$ of a non-homogeneous equation \eqref{e_wf} (which do not necessarily satisfy the boundary conditions) is a solution to the homogeneous equation \eqref{e_hswf}.

We prepare two lemmas for the analysis of the equation.
The first one is the maximum principle \cite{r_km12}.

\begin{lemma}[Maximum principle for homogeneous equations]
\label{t_mp}
Let $\vb*{d} \in H^1(D)^N$ be a weak solution of the homogeneous equation \eqref{e_hswf} not necessarily satisfying the boundary conditions.
Then, we have
\[
\operatorname*{ess\ sup}_{D} \abs*{\vb*{d}} \le \operatorname*{ess\ sup}_{\partial D} \abs*{\vb*{d}},
\]
where $\operatorname*{ess\ sup}$ is the essential supremum and the right-hand side is understood as the trace sense.
\end{lemma}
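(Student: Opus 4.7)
\emph{Proof plan.} The key observation is that the zeroth-order coefficient $1-\chi_\Omega$ is non-negative, so \eqref{e_hswf} is of the classical type $-a\Delta\vb*{d}+c\,\vb*{d}=0$ with $c\ge 0$ and admits a weak maximum principle. I plan to prove the bound by testing the weak form against a truncation of $\vb*{d}$ that vanishes wherever $|\vb*{d}|$ does not exceed its boundary supremum, and then showing that both terms in the resulting identity are non-negative and hence individually zero.

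Let $M := \operatorname*{ess\ sup}_{\partial D}|\vb*{d}|$, which I may assume finite (otherwise there is nothing to prove); the degenerate case $M=0$ reduces at once by testing with $\vb*{u}=\vb*{d}\in H^1_0(D)^N$. For $M>0$ I consider
\[
\vb*{u} := \Bigl(1-\frac{M}{|\vb*{d}|}\Bigr)^{+}\vb*{d},
\]
extended by $\vb*{0}$ on $\{|\vb*{d}|\le M\}$. The first step is to verify that $\vb*{u}\in H^1_0(D)^N$: on the set $E:=\{|\vb*{d}|>M\}$ the scalar factor lies in $[0,1]$ and $|\vb*{d}|$ is bounded below by $M$, so the Sobolev chain rule yields $\vb*{u}\in H^1(D)^N$; the zero trace on $\partial D$ follows from $|\vb*{d}|\le M$ there in the trace sense.

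Substituting $\vb*{u}$ into \eqref{e_hswf} and computing on $E$ gives
\[
\nabla\vb*{d}:\nabla\vb*{u}
=\Bigl(1-\frac{M}{|\vb*{d}|}\Bigr)|\nabla\vb*{d}|^2
+\frac{M}{|\vb*{d}|^3}\sum_{j=1}^{N}\bigl(\vb*{d}\cdot\partial_{x_j}\vb*{d}\bigr)^2,
\]
while $\vb*{d}\cdot\vb*{u}=(1-M/|\vb*{d}|)|\vb*{d}|^2$, and both quantities vanish off $E$. Both are non-negative on $E$, so the identity \eqref{e_hswf} forces each integrand to be zero almost everywhere. The vanishing of the gradient term on $E$ implies $\nabla\vb*{d}=\vb*{0}$ a.e.\ on $E$ (the factor $1-M/|\vb*{d}|$ being strictly positive there), hence $\nabla|\vb*{d}|=0$ a.e.\ on $E$ and therefore $\nabla(|\vb*{d}|-M)^+=\vb*{0}$ a.e.\ in $D$. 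Since $(|\vb*{d}|-M)^+\in H^1_0(D)$ and $D$ is connected, I conclude $(|\vb*{d}|-M)^+\equiv 0$, i.e.\ $|\vb*{d}|\le M$ a.e., which is the desired bound.

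The main technical obstacle is making the Sobolev chain rule rigorous for the truncation $\vb*{u}$ near the level set $\{|\vb*{d}|=M\}$, since $\vb*{d}\mapsto(1-M/|\vb*{d}|)^+\vb*{d}$ is not globally Lipschitz. I plan to handle this by approximating $t\mapsto(1-M/t)^+$ by a monotone sequence of smooth, globally Lipschitz functions supported in $\{t>M\}$, computing weak gradients of the mollified test functions, and passing to the limit by dominated convergence. The strict inequality $|\vb*{d}|>M>0$ on the support keeps one uniformly away from the singularity of $1/|\vb*{d}|$, so the limiting procedure, while standard, is the only place that requires real care.
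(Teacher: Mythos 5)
Your proposal is correct, but it reaches the conclusion by a genuinely different route from the paper. The paper exploits the diagonal structure of the system: for every unit vector $\vb*{e}$ the scalar function $d^{\vb*{e}}=\vb*{d}\cdot\vb*{e}$ solves the scalar homogeneous equation, and the scalar bound is proved with the textbook Stampacchia test function $k_+=\max\{0,d^{\vb*{e}}-c\}$, where the sign condition uses $c\ge 0$ exactly as you use $M\ge 0$; taking the supremum over directions then gives $|\vb*{d}|\le c$. You instead run a single vectorial truncation argument with $\vb*{u}=\bigl(1-M/|\vb*{d}|\bigr)^{+}\vb*{d}$, in the spirit of maximum principles for systems (Kresin--Maz'ya); your pointwise computation of $\nabla\vb*{d}:\nabla\vb*{u}$ and of $\vb*{d}\cdot\vb*{u}$ on $\{|\vb*{d}|>M\}$ is right, both terms are nonnegative, and the endgame ($\nabla(|\vb*{d}|-M)^{+}=\vb*{0}$, zero trace, connectedness) is sound, granted the standard facts that traces and weak gradients behave well under Lipschitz postcomposition. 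What each approach buys: the componentwise reduction keeps every chain-rule issue at the scalar level and is shorter for this diagonal system, while your argument treats the vector field in one stroke and would survive couplings for which the scalar reduction is unavailable. One simplification for your ``main technical obstacle'': the map $\vb*{v}\mapsto(1-M/|\vb*{v}|)^{+}\vb*{v}$ equals $\vb*{v}-P_{\overline{B}_M}(\vb*{v})$, with $P_{\overline{B}_M}$ the nearest-point projection onto the closed ball of radius $M$, hence it is globally Lipschitz (constant at most $2$) and vanishes at the origin; so $\vb*{u}\in H^1(D)^N$ and the trace identity follow directly from the Lipschitz chain rule, and the mollification scheme you sketch is only needed, in a routine way, to confirm the a.e.\ formula for $\nabla\vb*{u}$.
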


\begin{proof}
Let $c = \operatorname*{ess\ sup}_{\partial D} \abs*{\vb*{d}}$.
First consider the scalar-valued equation
\begin{equation}
\label{e_hs1wf}
a\int_D \nabla d\cdot\nabla u+\int_{D\setminus \Omega}d u = 0
\quad \forall u \in H^1_0(D).
\end{equation}
In order to show $\abs{d} \le c$ a.e.\ in $D$, we only show $d \le c$ a.e.\ in $D$ because we can show $d \ge -c$ by a similar way.
Let us take the test function
\[
u(\vb*{x}) = k_+(\vb*{x}) = \max\{ 0, d(\vb*{x})-c \}.
\]
Since $u \ge 0$ a.e.\ on $D$ and $u \in H^1_0(D)$ by the assumption,
it follows from the weak form \eqref{e_hs1wf} that
\[
a\int_D \nabla d\cdot\nabla k_+ +\int_{D\setminus \Omega}d k_+ \le 0.
\]
Noting that if $k_+ > 0$, then $d > c \ge 0$,
we have
\[
a\int_D \abs{\nabla k_+}^2 \le 0.
\]
Therefore, $\nabla k_+ = 0$ on $D$, so $k_+ = 0$, or $d \le c$.
Thus, we have shown that $\abs{d} \le c$ a.e.\ in $D$.

For the vector-valued case, by the above argument we see that
\[
\operatorname*{ess\ sup}_D \abs{\vb*{d}\cdot \vb*{e}}
\le \operatorname*{ess\ sup}_{\partial D} \abs{\vb*{d}\cdot \vb*{e}}
\le c
\]
for all unit vector $\vb*{e} \in \mathbb{R}^N$
since $d^{\vb*{e}}(\vb*{x}) := \vb*{d}(\vb*{x})\cdot\vb*{e}$ is a weak solution of the scalar-valued equation \eqref{e_hs1wf}.
Therefore, we have the desired inequality of this lemma.
\end{proof}

The interior $H^1$ estimate is mentioned in \cite[Proof of Theorem 1 in Subsection 6.3.1]{r_e10} and \cite[Problem 8.2]{r_gt83},
but here we give an abstract version since we make a complete proof including the coefficients in the later sections.

\begin{lemma}[Interior $H^1$ estimate for homogeneous equations]
\label{t_ire_ab}
Let $\vb*{d} \in H^1(D)^N$ be a weak solution of the homogeneous equation \eqref{e_hswf} not necessarily satisfying the boundary conditions.
Then, for a non-negative function $c \in W^{2, \infty}_0 (D)$ such that $c = 1$ on $\Omega$, the inequality
\[
\int_\Omega \abs*{\nabla \vb*{d}}^2 \le \frac{1}{2}\int_D \abs*{\Delta c}\abs*{\vb*{d}}^2
\]
holds.
\end{lemma}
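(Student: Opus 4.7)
The plan is to test the homogeneous weak form \eqref{e_hswf} against $\vb*{u} = c\vb*{d}$, where $c$ is the weight appearing in the statement. This is admissible: since $c \in W^{2,\infty}_0(D)$ has compact support in $D$ and is Lipschitz, $c\vb*{d} \in H^1_0(D)^N$ whenever $\vb*{d} \in H^1(D)^N$, so the assumed lack of boundary conditions on $\vb*{d}$ itself causes no problem.

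The first step is an algebraic expansion of the Frobenius product: writing out components gives $\partial_{x_j}(c d^i) = (\partial_{x_j}c) d^i + c\, \partial_{x_j}d^i$, hence
\[
\nabla\vb*{d}:\nabla(c\vb*{d})
= c\,\abs{\nabla\vb*{d}}^2 + \sum_{i,j} d^i (\partial_{x_j}d^i)(\partial_{x_j}c)
= c\,\abs{\nabla\vb*{d}}^2 + \tfrac{1}{2}\nabla c \cdot \nabla \abs{\vb*{d}}^2,
\]
using that $\sum_i d^i \nabla d^i = \tfrac{1}{2}\nabla \abs{\vb*{d}}^2$ in the weak sense.

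The second step is integration by parts on the cross term. Because $c$ has compact support in $D$, while $\abs{\vb*{d}}^2 \in W^{1,1}_{\mathrm{loc}}(D)$ and $\Delta c \in L^\infty(D)$, the identity
\[
\int_D \nabla c \cdot \nabla \abs{\vb*{d}}^2 = -\int_D (\Delta c)\,\abs{\vb*{d}}^2
\]
holds (by approximating $\abs{\vb*{d}}^2$ with smooth functions, or by directly appealing to the divergence theorem for $W^{1,1}$ vector fields against compactly supported $C^{1,1}$ test fields). Substituting everything back into \eqref{e_hswf} yields the key identity
\[
a\int_D c\,\abs{\nabla\vb*{d}}^2 + \int_{D\setminus\Omega} c\,\abs{\vb*{d}}^2 = \frac{a}{2}\int_D (\Delta c)\,\abs{\vb*{d}}^2.
\]

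The final step exploits the two hypotheses on $c$: since $c \ge 0$ on $D$ and $c = 1$ on $\Omega$, we drop the non-negative void-domain term on the left and bound below the first integral by $a\int_\Omega \abs{\nabla\vb*{d}}^2$, then bound $\Delta c \le \abs{\Delta c}$ on the right, and finally divide by $a$ to reach the claimed estimate. The only delicate point in the whole argument is the integration-by-parts identity, since $\vb*{d}$ has only $H^1$ regularity; a standard mollification of $\vb*{d}$ on the compact support of $c$ (which sits strictly inside $D$) takes care of this without needing interior regularity of $\vb*{d}$.
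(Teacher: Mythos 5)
Your proposal is correct and follows essentially the same route as the paper: testing the homogeneous weak form with $\vb*{u} = c\vb*{d}$, rewriting the cross term as $\tfrac{1}{2}\nabla c\cdot\nabla\abs{\vb*{d}}^2$, integrating by parts against $\Delta c$, and then using $c \ge 0$, $c = 1$ on $\Omega$ to conclude. The extra care you take with the admissibility of $c\vb*{d}$ and the mollification argument for the integration by parts is a welcome, if routine, addition to what the paper leaves implicit.
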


\begin{proof}
By testing $\vb*{u} = c\vb*{d} \in H^1_0(D)$ in \eqref{e_hswf},
since $\nabla \vb*{u} = \vb*{d}\otimes\nabla c+c\nabla \vb*{d}$,
we have
\[
a\int_D c\abs{\nabla \vb*{d}}^2+a\int_D \nabla \vb*{d}:\vb*{d}\otimes\nabla c+\int_{D\setminus \Omega}c\abs{\vb*{d}}^2 = 0.
\]
Here, $\otimes$ represents the tensor product and the identity $\nabla \vb*{d}:\vb*{d}\otimes\nabla c = \frac{1}{2}\nabla(\abs{\vb*{d}}^2)\cdot\nabla c$ gives us
\[
a\int_D c\abs{\nabla \vb*{d}}^2-a\int_D \frac{1}{2}\Delta c\abs{\vb*{d}}^2+\int_{D\setminus \Omega}c\abs{\vb*{d}}^2 = 0.
\]
Therefore,
\[
\int_{\Omega} \abs{\nabla \vb*{d}}^2
\le \int_D c\abs{\nabla \vb*{d}}^2
\le \frac{1}{2}\int_D \abs{\Delta c}\abs{\vb*{d}}^2-\frac{1}{a}\int_{D\setminus \Omega}c\abs{\vb*{d}}^2
\le \frac{1}{2}\int_D \abs{\Delta c}\abs{\vb*{d}}^2.
\]
The proof is complete.
\end{proof}

\section{Interval shape}

\subsection{The statements of the results}

\begin{theorem}[Interval shape in the whole line]
\label{t_int_whole}
Consider the situation
\[
\Omega := (f_l, f_r) \subset \bar{D} := \mathbb{R}
\]
with $f_l < f_r$.
Then, the followings hold.
\begin{enumerate}
\item
For each $a > 0$, the equation
\begin{equation}
\label{e_wf_int_whole}
a \int_{-\infty}^{+\infty} s' u'+\left(\int_{-\infty}^{+\infty}-\int_{f_l}^{f_r}\right) s u = \int_{f_l}^{f_r} u'
\quad \forall u \in H^1(\mathbb{R})
\end{equation}
admits a unique weak solution $s \in H^1(\mathbb{R})$.
\item
The solution $s$ can be calculated exactly with the exponential functions.
\item
The differential derivative $s'$ is constant in the shape domain $\Omega$.
\item
The PDE-based thickness $T_{\bar{D}, \Omega}^a := \frac{2}{\sqrt{a} s'}$ converges to the thickness $\bar{T}_{\Omega} = f_r-f_l$ on $\Omega$ as $a \to 0$.
\item
Moreover, the equality
\[
T_{\bar{D}, \Omega}^a = \bar{T}_\Omega+2 \sqrt{a}
\]
holds for all $a > 0$.
\end{enumerate}
\end{theorem}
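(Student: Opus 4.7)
The plan is to exploit the one-dimensional setting by constructing the unique solution in closed form; once this is done, parts (2)--(5) become direct computations, while part (1) is a standard variational existence result.

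For (1), I would apply the Lax--Milgram theorem to the bilinear form $(s,u)_a$ on the Hilbert space $H^1(\mathbb{R}) = H^1_0(\mathbb{R})$. Continuity of the functional $u \mapsto \int_{f_l}^{f_r} u'$ is exactly the estimate already recorded in the preliminaries. The only delicate point is the coercivity of $(\cdot,\cdot)_a$: the mass term $\int_{\mathbb{R} \setminus \Omega} s^2$ is inactive on $\Omega$, so I must control $\|s\|_{L^2(\Omega)}$ by the $(\cdot,\cdot)_a$-norm. I would do this via the one-dimensional embedding $H^1(\mathbb{R}) \hookrightarrow C^0_b(\mathbb{R})$ to bound the trace $|s(f_l)|^2$ by an exterior $H^1$ norm, followed by the fundamental theorem of calculus $s(x) = s(f_l) + \int_{f_l}^{x} s'$ on $\Omega$.

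For (2) and (3), I would interpret the weak equation \eqref{e_wf_int_whole} as the distributional identity
\[
-a s'' + (1 - \chi_\Omega) s = \delta_{f_l} - \delta_{f_r} \quad \text{on } \mathbb{R}.
\]
On the shape domain, $(1-\chi_\Omega)=0$, hence $s''=0$ and $s$ is affine, which is part (3). On each unbounded component of $\mathbb{R} \setminus \overline{\Omega}$ we have $-as''+s=0$; selecting the decaying mode forced by $s \in H^1(\mathbb{R})$ yields $s(x) = A e^{(x-f_l)/\sqrt{a}}$ on $(-\infty,f_l)$ and $s(x) = B e^{-(x-f_r)/\sqrt{a}}$ on $(f_r,+\infty)$. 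Combined with the interior ansatz $s(x) = Cx+D$, the two continuity conditions at $f_l, f_r$ together with the jump conditions $[s'](f_l) = 1/a$ and $[s'](f_r) = -1/a$ read off from the delta measures give a closed linear system in the four unknowns $A, B, C, D$.

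For (4) and (5), I would solve this system: the problem is invariant under the reflection $x \mapsto f_l + f_r - x$, $s \mapsto -s$, so $B = -A$ and the midpoint value of $s$ vanishes, which collapses the system to a one-line computation. Writing $L := f_r - f_l$, the slope in $\Omega$ comes out to
\[
s'|_\Omega \;=\; \frac{2}{\sqrt{a}\,(L + 2\sqrt{a})},
\]
and substitution into $T^a_{\bar{D},\Omega} = 2/(\sqrt{a}\, s'|_\Omega)$ immediately yields the exact equality $T^a_{\bar{D},\Omega} = L + 2\sqrt{a}$ of part (5); letting $a \to 0$ then gives part (4). The one place where genuine care is required is the coercivity check in step (1), since the unbounded domain combined with the vanishing of the mass term on $\Omega$ rules out the most elementary estimates; everything else amounts to careful bookkeeping of four matching conditions for a second-order ODE with two delta source terms.
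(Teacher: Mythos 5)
Your proposal follows essentially the same route as the paper: Lax--Milgram for existence, strong form with exponential solutions in the void, affine solution in $\Omega$, jump conditions at the two interface points, and antisymmetry about the midpoint to collapse the linear system. The final slope $s'|_\Omega = 2/(\sqrt{a}(L+2\sqrt{a}))$ and the identity $T^a_{\bar{D},\Omega} = L + 2\sqrt{a}$ agree with the paper's $p^*$ computation.

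Two minor remarks. First, your distributional identity has a sign slip: since $-\chi'_\Omega = \delta_{f_r} - \delta_{f_l}$, the right-hand side should be $\delta_{f_r} - \delta_{f_l}$, not $\delta_{f_l} - \delta_{f_r}$. The jump conditions you then state, $[s'](f_l) = 1/a$ and $[s'](f_r) = -1/a$, are nevertheless the correct ones (they follow from the corrected sign), so this is a transcription error rather than a flaw in the argument. Second, your explicit coercivity discussion — controlling $\|s\|_{L^2(\Omega)}$ via the $H^1(\mathbb{R}) \hookrightarrow C^0_b$ embedding and the fundamental theorem of calculus — fills in a detail the paper leaves implicit when it asserts that $(\cdot,\cdot)_a$ makes $H^1(D)$ a Hilbert space; this is a genuine subtle point and your treatment of it is correct and welcome.
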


\begin{theorem}[Interval shape in general interval]
\label{t_int_gen}
Consider the situation
\[
\Omega := (f_l, f_r) \subset D := (b_l, b_r)
\]
with $b_l < f_l < f_r < b_r$.
Then, the followings hold.
\begin{enumerate}
\item
For each $a > 0$, the equation
\begin{equation}
\label{e_wf_int_gen}
a \int_{b_l}^{b_r} s' u'+\left(\int_{b_l}^{b_r}-\int_{f_l}^{f_r}\right) s u = \int_{f_l}^{f_r} u'
\quad \forall u \in H_0^1(D)
\end{equation}
admits a unique weak solution $s \in H_0^1(D)$.
\item
The solution $s$ can be calculated exactly with the hyperbolic functions.
\item
The differential derivative $s'$ is constant in the shape domain $\Omega$.
\item
The PDE-based thickness $T_{D, \Omega}^a := \frac{2}{\sqrt{a} s'}$ converges to the thickness $\bar{T}_{\Omega} = f_r-f_l$ on $\Omega$ as $a \to 0$.
\item
Moreover, the estimate
\[
0 < 2\sqrt{a} \le T_{D, \Omega}^a-\bar{T}_\Omega \le 2\sqrt{a}+4 T\exp\qty(-2\frac{m}{\sqrt{a}})
\]
holds for all $a > 0$,
where $T := \bar{T}_\Omega = f_r-f_l > 0$ and $m := \min\{ b_r-f_r, f_l-b_l \} > 0$.
\end{enumerate}
\end{theorem}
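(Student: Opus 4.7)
The plan is to reduce \eqref{e_wf_int_gen} to three constant-coefficient ODEs on the subintervals $(b_l,f_l)$, $(f_l,f_r)$, $(f_r,b_r)$, glued by matching conditions at $f_l$ and $f_r$, so that every item in (2)--(5) can be read off from a single explicit formula. Item (1) follows from the Lax--Milgram argument that accompanies \eqref{e_wf} in the preliminaries, specialised to $N=1$ and the bounded domain $D=(b_l,b_r)$.

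For items (2) and (3) I would write the piecewise ansatz $s(x)=B\sinh\!\bigl((x-b_l)/\sqrt{a}\bigr)$ on $(b_l,f_l)$ and $s(x)=C\sinh\!\bigl((b_r-x)/\sqrt{a}\bigr)$ on $(f_r,b_r)$, both automatically respecting the Dirichlet condition and solving $-as''+s=0$; inside $\Omega$ the equation is $-as''=0$, forcing $s(x)=\alpha x+\beta$, which establishes (3) directly. The distributional source $-\chi_\Omega'=-\delta_{f_l}+\delta_{f_r}$ supplies the jump conditions $[s']_{f_l}=1/a$ and $[s']_{f_r}=-1/a$, which combined with continuity of $s$ at $f_l$ and $f_r$ determine the four constants. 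Writing $l:=f_l-b_l$, $r:=b_r-f_r$, $\mu_l:=\tanh(l/\sqrt{a})$, $\mu_r:=\tanh(r/\sqrt{a})$, and eliminating $\beta,B,C$, the matching collapses into a single linear equation for $\alpha$ whose solution yields the closed form
\[
T_{D,\Omega}^a=\frac{2}{\sqrt{a}\,\alpha}=\frac{2T}{\mu_l+\mu_r}+2\sqrt{a}.
\]

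From this formula item (4) is immediate because $\mu_l,\mu_r\to 1$ as $a\to 0^+$, and the lower bound in (5) follows directly from $\mu_l+\mu_r\le 2$. The upper bound reduces to
\[
T\,\frac{2-\mu_l-\mu_r}{\mu_l+\mu_r}\le 4T\exp\!\bigl(-2m/\sqrt{a}\bigr),
\]
which I would prove by invoking the monotonicity of $\tanh$ to bound $\mu_l,\mu_r\ge\tanh(m/\sqrt{a})$, combined with the elementary inequality $1-\tanh x\le 2e^{-2x}$ and the identity $(1-\tanh x)/\tanh x=2/(e^{2x}-1)$. The step I expect to be the main obstacle is producing the clean constant $4$ uniformly in $a>0$: the natural bound $2/(e^{2m/\sqrt{a}}-1)$ is comfortably dominated by $4\exp(-2m/\sqrt{a})$ once $e^{2m/\sqrt{a}}\ge 2$, so covering the large-$a$ regime will likely require either a case split depending on the size of $a$ relative to $m^2$ or a more careful grouping of the $\tanh$ factors.
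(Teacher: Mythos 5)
Your route is the same as the paper's: the same piecewise $\sinh$ ansatz on $(b_l,f_l)$ and $(f_r,b_r)$, a linear profile inside $\Omega$, continuity together with the jump conditions $a\,(s'(f_l+0)-s'(f_l-0))=1$ and $a\,(s'(f_r-0)-s'(f_r+0))=1$ coming from $-\chi_\Omega'$, and Lax--Milgram for item (1). Your closed form $T_{D,\Omega}^a=\frac{2T}{\mu_l+\mu_r}+2\sqrt{a}$ agrees with the paper's expression $2\sqrt{a}+\frac{2T}{\tanh\alpha+\tanh\beta}$, where $\alpha=(f_l-b_l)/\sqrt{a}$ and $\beta=(b_r-f_r)/\sqrt{a}$, so items (2)--(4) and the lower bound in (5) follow exactly as you describe.

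The obstacle you flag at the end is genuine, and it is not merely a bookkeeping issue on your side: the paper's own proof passes over it by invoking $\frac{1-\tanh x}{\tanh x}\le 2\exp(-2x)$, which is false for every $x>0$, since $\frac{1-\tanh x}{\tanh x}=\frac{2}{e^{2x}-1}>2e^{-2x}$. Your proposed fix $\frac{2}{e^{2x}-1}\le 4e^{-2x}$, valid when $e^{2x}\ge 2$, is the right repair in the small-$a$ regime, but no case split or regrouping of the $\tanh$ factors can rescue the claim uniformly in $a$: as $a\to\infty$ one has $\frac{2-\mu_l-\mu_r}{\mu_l+\mu_r}\sim\frac{2\sqrt{a}}{(f_l-b_l)+(b_r-f_r)}\to\infty$ while $4\exp(-2m/\sqrt{a})\to 4$, so the stated inequality $T_{D,\Omega}^a-\bar{T}_\Omega\le 2\sqrt{a}+4T\exp(-2m/\sqrt{a})$ actually fails for $a$ large. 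The upper bound in item (5) should therefore be read, and can be proved by your argument, under a smallness restriction such as $e^{2m/\sqrt{a}}\ge 2$ (equivalently $a\le 4m^2/(\log 2)^2$), or with the exponential term replaced by the exact remainder $\frac{(2-\mu_l-\mu_r)T}{\mu_l+\mu_r}$. Apart from this point, which is a defect of the paper's estimate rather than of your proposal, your proof is complete.
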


\subsection{Proof of Theorem \ref{t_int_whole}}

The unique existence of the solution $s \in H^1(\mathbb{R})$ to \eqref{e_wf_int_whole} is guaranteed by the Lax-Milgram theorem as mentioned in the previous section.
First note that the strong form for the one dimensional problem \eqref{e_wf_int_whole} is given by
\[
\begin{cases}
-a s''+(1-\chi_\Omega)s = -\chi_\Omega' \quad \text{in $\mathbb{R}$,} \\
s(-\infty) = s(+\infty) = 0.
\end{cases}
\]
In particular, the solution $s$ satisfies
\[
s = a s'' \quad \text{in $(-\infty, f_l)\cup(f_r, +\infty)$,}
\]
\[
s'' = 0 \quad \text{in $(f_l, f_r)$.}
\]
Therefore, we see that $s$ has the form
\[
s(x) =
\begin{cases}
C_l \exp\left(\frac{x}{\sqrt{a}}\right) & \text{if $x \le f_l$,} \\
C_r \exp\left(-\frac{x}{\sqrt{a}}\right) & \text{if $x \ge f_r$.} \\
\end{cases}
\]
In the one dimensional space, since $H^1(I) \subset C^{0, 1/2}(I)$ for any bounded interval $I$,
we simply need to connect the two parts continuously with a linear expression.
In particular, the slope of $s$ in the shape domain $\Omega$ is constant, which is denoted by $p^*$.

We now claim
\begin{equation}
\label{e_int_jc}
a(s'(f_l+0)-s'(f_l-0)) = 1,
\quad a(s'(f_r-0)-s'(f_r+0)) = 1,
\end{equation}
where $s'(x+0)$ is a right derivative while $s'(x-0)$ is a left derivative.
Indeed, it follows from the weak form
\[
\begin{aligned}
&a \int_{-\infty}^{+\infty} s' u'+\left(\int_{-\infty}^{+\infty}-\int_{f_l}^{f_r}\right) s u
= a \int_{-\infty}^{+\infty} s' u'+a\left(\int_{-\infty}^{f_l}+\int_{f_r}^{+\infty}\right) s'' u \\
&\quad = a \int_{f_l}^{f_r} s' u'+a\left[s' u\right]_{-\infty}^{f_l}+a\left[s' u\right]_{f_r}^{+\infty} \\
&\quad = a\left[s' u\right]_{f_l}^{f_r}+a\left[s' u\right]_{-\infty}^{f_l}+a\left[s' u\right]_{f_r}^{+\infty} \\
&\quad = a(s'(f_l-0)-s'(f_l+0))u(f_l)+a(s'(f_r-0)-s'(f_r+0))u(f_r).
\end{aligned}
\]
Since it equals to
\[
\int_{f_l}^{f_r} u' = u(f_r)-u(f_l)
\]
for all test functions $u$,
we see that the equalities \eqref{e_int_jc} hold.

Due to the symmetry around $x = \frac{f_l+f_r}{2}$ in the setting,
it is enough to consider
\[
s(x) =
\begin{cases}
p^* \left(x-\frac{f_l+f_r}{2}\right) & \text{if $\frac{f_l+f_r}{2} \le x \le f_r$,} \\
C \exp\left(-\frac{x}{\sqrt{a}}\right) & \text{if $x \ge f_r$,} \\
\end{cases}
\]
with the conditions at $x = f_r$:
\[
p^* \frac{f_r-f_l}{2} = C \exp\left(-\frac{f_r}{\sqrt{a}}\right)
\]
and
\[
p^*+\frac{C}{\sqrt{a}} \exp\left(-\frac{f_r}{\sqrt{a}}\right) = \frac{1}{a}.
\]
Therefore, if $C$ is eliminated, it is calculated as follows:
\[
p^*
= \frac{1}{a} \left(1+\frac{1}{\sqrt{a}} \frac{f_r-f_l}{2}\right)^{-1}
= \frac{2}{\sqrt{a}((f_r-f_l)+2 \sqrt{a})}.
\]
The PDE-based thickness in this case is
\[
T_{\bar{D}, \Omega}^a = \frac{2}{\sqrt{a} p^*} = \bar{T}_\Omega+2 \sqrt{a}.
\]
This completes the proof of Theorem \ref{t_int_whole}.

\subsection{Proof of Theorem \ref{t_int_gen}}

We prove this theorem in a similar way to Theorem \ref{t_int_whole}.
First note that the strong form for the one dimensional problem \eqref{e_wf_int_gen} is
\[
\begin{cases}
-a s''+(1-\chi_\Omega)s = -\chi_\Omega' \quad \text{in $(b_l, b_r)$,} \\
s(b_l) = s(b_r) = 0.
\end{cases}
\]
In particular, the solution $s$ satisfies
\[
s = a s'' \quad \text{in $(b_l, f_l)\cup(f_r, b_r)$,}
\]
\[
s'' = 0 \quad \text{in $(f_l, f_r)$.}
\]
Hence, using integral constants $C_l$ and $C_r$, we have
\[
s(x) =
\begin{cases}
-C_l\sinh \frac{x-b_l}{\sqrt{a}} & \text{for $b_l \le x \le f_l$,} \\
-C_r\sinh \frac{x-b_r}{\sqrt{a}} & \text{for $f_r \le x \le b_r$.} \\
\end{cases}
\]
In the one-dimensional space, since $H^1(D) \subset C^{0, 1/2}(D)$, in $f_l \le x \le f_r$, we simply need to connect the two parts continuously with a linear expression.
The slope at this time is given by
\[
p^* := \frac{C_l\sinh\frac{f_l-b_l}{\sqrt{a}}+C_r\sinh\frac{b_r-f_r}{\sqrt{a}}}{T}.
\]

Since the equalities
\[
a(s'(f_l+0)-s'(f_l-0)) = 1,
\quad a(s'(f_r-0)-s'(f_r+0)) = 1
\]
hold as in the previous case,
$C_l$ and $C_r$ satisfy the simultaneous linear equations
\[
\mqty(\frac{a}{T}\sinh\frac{f_l-b_l}{\sqrt{a}}+\sqrt{a}\cosh\frac{f_l-b_l}{\sqrt{a}} & \frac{a}{T}\sinh\frac{b_r-f_r}{\sqrt{a}} \\ \frac{a}{T}\sinh\frac{f_l-b_l}{\sqrt{a}} & \frac{a}{T}\sinh\frac{b_r-f_r}{\sqrt{a}}+\sqrt{a}\cosh\frac{b_r-f_r}{\sqrt{a}})\mqty(C_l \\ C_r) = \mqty(1 \\ 1).
\]
For simplicity, let $\alpha := \frac{f_l-b_l}{\sqrt{a}}$, $\beta := \frac{b_r-f_r}{\sqrt{a}}$, and $k := \frac{T}{\sqrt{a}} = \frac{f_r-f_l}{\sqrt{a}}$.
We then have
\[
\mqty(\sinh\alpha+k\cosh\alpha & \sinh\beta \\ \sinh\alpha & \sinh\beta+k\cosh\beta)\mqty(C_l \\ C_r) = \frac{k}{\sqrt{a}}\mqty(1 \\ 1).
\]
This solves
\[
\mqty(C_l \\ C_r)
= \frac{1}{\sqrt{a}}\frac{k}{\sinh(\alpha+\beta)+k\cosh\alpha\cosh\beta}\mqty(\cosh\beta \\ \cosh\alpha).
\]
Therefore, as $a \to 0$ we got
\[
\sqrt{a}p^*
= \frac{1}{T}\frac{k\sinh(\alpha+\beta)}{\sinh(\alpha+\beta)+k\cosh\alpha\cosh\beta}
\to \frac{2}{T}.
\]

Moreover, we have the estimate on convergence speed as follows.
\[
T_{D, \Omega}-\bar{T}_\Omega
= T\qty(\frac{2}{k}+\frac{2\cosh\alpha\cosh\beta}{\sinh(\alpha+\beta)}-1)
= 2\sqrt{a}+\frac{1-\tanh\alpha+1-\tanh\beta}{\tanh\alpha+\tanh\beta}T
\]
Since $0 < \tanh\alpha, \tanh\beta < 1$, we see that
\[
\begin{aligned}
2\sqrt{a}
\le T_{D, \Omega}-\bar{T}_\Omega
&\le 2\sqrt{a}+\left(\frac{1-\tanh\alpha}{\tanh\alpha}+\frac{1-\tanh\beta}{\tanh\beta}\right)T \\
&\le 2\sqrt{a}+\left(2\exp(-2\alpha)+2\exp(-2\beta)\right)T \\
&\le 2\sqrt{a}+4T\exp\left(-2\frac{m}{\sqrt{a}}\right).
\end{aligned}
\]
This completes the proof of Theorem \ref{t_int_gen}.

\section{Band shape}

\subsection{The statements of the results}

\begin{theorem}[Band shape in the whole space]
\label{t_band_whole}
Consider the situation
\[
\Omega := \mathbb{R}\times(f_l, f_r) \subset \bar{D} := \mathbb{R}\times\mathbb{R}
\]
with $f_l < f_r$.
Then, the followings hold.
\begin{enumerate}
\item
For each $a > 0$ and $L > 0$, the equation
\begin{equation}
\label{e_wf_band_whole}
a \int_{\bar{D}_L} \nabla\vb*{s}:\nabla\vb*{u}+\int_{\bar{D}_L\setminus\Omega_L} \vb*{s}\cdot\vb*{u} = \int_{\Omega_L} \operatorname{div}\vb*{u}
\quad \forall \vb*{u} \in H^1((\mathbb{R}/L\mathbb{Z})\times\mathbb{R})^2.
\end{equation}
admits a unique solution $\vb*{s} \in H^1((\mathbb{R}/L\mathbb{Z})\times\mathbb{R})^2$.
Here, $H^1((\mathbb{R}/L\mathbb{Z})\times\mathbb{R})$ is the space of $L$-periodic functions in $x$ direction whose restriction to the unit region $\bar{D}_L := [0, L)\times\mathbb{R}$ belongs to $H^1(\mathbb{R}\times\mathbb{R})$
while $\Omega_L := [0, L)\times(f_l, f_r)$.
\item
The solution $\vb*{s}$ is of the form $\vb*{s}(x, y) = (0, S(y))$,
where $S$ is the solution of the one dimensional problem \eqref{e_wf_int_whole}.
\item
The divergence $\operatorname{div}\vb*{s}$ is constant in the shape domain $\Omega$.
\item
The PDE-based thickness $T_{\bar{D}, \Omega}^a := \frac{2}{\sqrt{a}\operatorname{div}\vb*{s}}$ converges to the thickness $\bar{T}_{\Omega} = f_r-f_l$ on $\Omega$ as $a \to 0$.
\item
Moreover, the equality
\[
T_{\bar{D}, \Omega}^a = \bar{T}_\Omega+2 \sqrt{a}
\]
holds for all $a > 0$.
\end{enumerate}
\end{theorem}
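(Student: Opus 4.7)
The plan is to reduce \eqref{e_wf_band_whole} to the one-dimensional problem of Theorem \ref{t_int_whole} by exploiting translation invariance in $x$, and then to inherit parts (3)--(5) directly from the 1D analysis.

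First I would prove the unique existence in part (1) by applying the Lax--Milgram / Riesz argument on the periodic Hilbert space $H^1((\mathbb{R}/L\mathbb{Z})\times\mathbb{R})^2$ equipped with the inner product $(\cdot,\cdot)_a$, exactly as in Section 2. Since $\abs{\Omega_L} = L(f_r-f_l) < \infty$, the right-hand side is a continuous linear functional on this space; coercivity of $(\cdot,\cdot)_a$ against the $H^1$ norm follows from a Poincar\'e-type estimate controlling $\norm{\vb*{s}}_{L^2(\Omega_L)}$ by $\norm{\nabla\vb*{s}}_{L^2(\bar{D}_L)}$ and $\norm{\vb*{s}}_{L^2(\bar{D}_L\setminus\Omega_L)}$, obtained by integrating $\vb*{s}$ from the void slabs into $\Omega_L$ along $y$.

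Second, I would show that $\vb*{s}$ depends only on $y$. Because $\chi_\Omega$ is a function of $y$ alone and the cylinder $\bar{D}_L$ is $x$-periodic, the weak form is invariant under the horizontal translation $(x, y) \mapsto (x+h, y)$; hence $\vb*{s}(\cdot+h, \cdot)$ is again a weak solution, and the uniqueness from step one forces $\vb*{s}(x, y) = (s^1(y), s^2(y))$. With this structure in hand, for any test field $\vb*{u}$ both sides of \eqref{e_wf_band_whole} depend on $\vb*{u}$ only through its $x$-averages $\bar{u}^i(y) := \int_0^L u^i(x, y)\,dx$: the contribution $\int_{\Omega_L}\partial_x u^1$ vanishes by $L$-periodicity, while the remaining terms collapse by Fubini. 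It therefore suffices to test with $x$-independent fields of the form $(U^1(y), 0)$ and $(0, U^2(y))$ with $U^i \in H^1(\mathbb{R})$. The first choice yields a homogeneous scalar 1D problem, forcing $s^1 \equiv 0$ by uniqueness, while the second reproduces exactly the 1D weak form \eqref{e_wf_int_whole}, so $s^2 = S$ by Theorem \ref{t_int_whole}(1). This establishes part (2).

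Finally, $\operatorname{div}\vb*{s}(x, y) = (s^2)'(y) = S'(y)$, and since Theorem \ref{t_int_whole}(3)--(5) identify $S'$ as a constant on $\Omega$ with $2/(\sqrt{a}\,S'(y)) = \bar{T}_\Omega + 2\sqrt{a}$, parts (3)--(5) follow immediately. The main obstacle is the translation-invariance/uniqueness reduction to 1D (and the accompanying Poincar\'e coercivity on the periodic cylinder); once the identification $\vb*{s} = (0, S(y))$ is in place, everything else is a direct consequence of Theorem \ref{t_int_whole}.
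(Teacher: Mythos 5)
Your proof is correct, and it takes a genuinely different route from the paper in the key reduction step. The paper first decouples the vector weak form into two scalar problems (using $\int_{\Omega_L}\operatorname{div}\vb*{u} = \int_{\Omega_L}u^y_y$ by periodicity and the componentwise splitting of the Frobenius product), concludes $s^x\equiv 0$ as the homogeneous solution, and then identifies $s^y(x,y)=S(y)$ by a guess-and-verify argument: it exhibits the extension $\tilde S(x,y)=S(y)$ of the 1D solution as a solution of the 2D scalar problem and lets uniqueness close the loop. You instead first derive the $y$-only structure abstractly, via translation invariance of the data plus Lax--Milgram uniqueness (so $\vb*{s}(\cdot+h,\cdot)=\vb*{s}(\cdot,\cdot)$ for all $h$), and only afterwards decouple and reduce to the 1D weak form by Fubini with $x$-averaged test fields. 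Both arguments pivot on uniqueness of the weak solution; the paper's ``extend-and-verify'' is the more economical, while your symmetry argument is more conceptual and would carry over verbatim to any $x$-translation-invariant modification of the equation. You also explicitly flag the Poincar\'e-type estimate needed to make $\norm{\cdot}_a$ equivalent to the $H^1$ norm on the periodic cylinder, a point the paper tacitly assumes when asserting the spaces are Hilbert.
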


\begin{theorem}[Band shape in general domain]
\label{t_band_gen}
Consider the situation
\[
\Omega = \mathbb{R}\times(f_l, f_r)
\subset D = \{ (x, y) \in \mathbb{R}\times\mathbb{R} \mid b_l(x) < y < b_r(x) \}.
\]
with $\max b_l < f_l < f_r < \min b_r$.
Here, $b_l$ and $b_r$ are $C^1$ functions with a common periodicity $L > 0$ while $f_l$ and $f_r$ are constants.
Then, the followings hold.
\begin{enumerate}
\item
For each $a > 0$, the equation
\begin{equation}
\label{e_wf_band_gen}
a \int_{D_L} \nabla\vb*{s}:\nabla\vb*{u}+\int_{D_L\setminus\Omega_L} \vb*{s}\cdot\vb*{u} = \int_{\Omega_L} \operatorname{div}\vb*{u}
\quad \forall \vb*{u} \in H_0^1(D_{/L})^2.
\end{equation}
admits a unique solution $\vb*{s} \in H_0^1(D_{/L})^2$.
Here, $H_0^1(D_{/L})$ is the space of $L$-periodic functions in $x$ direction whose restriction to the unit region $D_L := D\cap([0, L)\times\mathbb{R})$ belongs to $H_0^1(D)$
while $\Omega_L := [0, L)\times(f_l, f_r)$.
\item
The solution $\vb*{s}$ is of the form $\vb*{s}(x, y) = (0, S(x, y))$,
where $S \in H_0^1(D_{/L})$ is the solution of the scalar-valued equation
\[
a\int_{D_L} \nabla S\cdot\nabla u+\int_{D_L\setminus\Omega_L} S u = \int_{\Omega_L} u_y
\quad \forall u \in H_0^1(D_{/L}).
\]
\item
The PDE-based thickness $T_{D, \Omega}^a := \frac{2}{\sqrt{a}\operatorname{div}\vb*{s}}$ converges to the thickness $\bar{T}_{\Omega} = f_r-f_l$ a.e.\ on $\Omega$ as $a \to 0$.
\item
Moreover, the estimate
\[
\norm{\frac{1}{T_{D, \Omega}^a}-\frac{1}{\bar{T}_{\Omega}}}_{L^2(\Omega_L)}
\le 2\frac{\sqrt{\abs{\Omega_L}}}{T^2} \sqrt{a}+2\frac{\sqrt{L}}{\sqrt{m}} \exp\left(-\frac{m}{\sqrt{a}}\right)
\]
holds for all $a > 0$.
Here, $T := \bar{T}_\Omega = f_r-f_l > 0$, $\abs{\Omega_L}$ is the area of $\Omega_L$, i.e.\ $\abs{\Omega_L} := L T > 0$ and $m := \min\{ \min b_r-f_r, f_l-\max b_l \} > 0$ .
\end{enumerate}
\end{theorem}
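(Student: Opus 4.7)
The plan is to reduce the vector problem to a scalar one by exploiting $x$-periodicity, compare the reduced solution with the one-dimensional solution $\bar S(y)$ from Theorems \ref{t_int_whole}--\ref{t_band_whole}, and control the difference by combining Lemmas \ref{t_mp} and \ref{t_ire_ab}. Unique existence in (1) follows from Lax--Milgram on $H_0^1(D_{/L})^2$, since $\abs{\Omega_L} = LT < \infty$, exactly as in the preliminaries. For (2), I test \eqref{e_wf_band_gen} with $\vb*{u} = (u^1, 0)$: the right-hand side $\int_{\Omega_L} u^1_x = \int_{f_l}^{f_r}\!\int_0^L u^1_x\,dx\,dy$ vanishes by $L$-periodicity, so $s^1$ is a weak solution of the scalar homogeneous equation, and choosing $u^1 = s^1$ together with $s^1|_{\partial D} = 0$ on the connected cell $D_L$ forces $s^1 \equiv 0$. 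The remaining equation for $S := s^2$ is obtained by testing against $\vb*{u} = (0, u)$, and $\operatorname{div}\vb*{s} = S_y$.

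Let $\bar S(y)$ be the one-dimensional solution of \eqref{e_wf_int_whole} extended to $\mathbb{R}^2$ as a function of $y$ alone; it coincides with the whole-band solution of Theorem \ref{t_band_whole} and satisfies the same PDE as $S$ on all of $\mathbb{R}^2$. Its restriction to $D_{/L}$ therefore solves the same non-homogeneous equation but fails the boundary condition on $\partial D$, so $d := S - \bar S$ is a scalar weak solution of the homogeneous equation with $d|_{\partial D} = -\bar S|_{\partial D}$. The explicit exponential decay of $\bar S$ outside $\Omega$, together with $b_r(x) - f_r \ge m$ and $f_l - b_l(x) \ge m$, gives
\[
\|\bar S\|_{L^\infty(\partial D)} \le \frac{1}{\sqrt{a}}\exp\qty(-\frac{m}{\sqrt{a}}),
\]
and Lemma \ref{t_mp} (scalar version) propagates this bound to $\|d\|_{L^\infty(D_L)}$.

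To turn this into a bound on $d_y = S_y - \bar S_y$ in $L^2(\Omega_L)$, I invoke Lemma \ref{t_ire_ab} with a one-variable cutoff $c(x, y) := \phi(y)$, where $\phi \in C^2(\mathbb{R})$ equals $1$ on $[f_l, f_r]$, is supported in $[f_l - m, f_r + m]$, and satisfies $\int |\phi''|\,dy \le C/m$ for a small absolute constant $C$. The hypotheses $\max b_l \le f_l - m$ and $\min b_r \ge f_r + m$ ensure $c$ vanishes on $\partial D$, so $c$ is admissible on the periodic cell. Combining the resulting $H^1$ estimate with the splitting
\[
\frac{1}{T_{D, \Omega}^a} - \frac{1}{\bar T_\Omega} = \frac{\sqrt{a}}{2}(S_y - \bar S_y) + \qty(\frac{\sqrt{a}}{2}\bar S_y - \frac{1}{T})
\]
yields (4): on $\Omega_L$ one has $\bar S_y = 2/(\sqrt{a}(T + 2\sqrt{a}))$ by Theorem \ref{t_band_whole}, so the second bracket equals $1/(T + 2\sqrt{a}) - 1/T$ and is bounded pointwise by $2\sqrt{a}/T^2$, contributing the first summand $2\sqrt{\abs{\Omega_L}}\sqrt{a}/T^2$ after taking the $L^2(\Omega_L)$-norm; the first bracket carries the exponentially small second summand. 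Claim (3) then follows from (4) by extracting a subsequence along which $L^2$ convergence becomes pointwise.

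The main obstacle will be calibrating $\phi$ so that Lemma \ref{t_ire_ab} produces exactly the constant $2\sqrt{L/m}$ appearing in (4), and confirming that the scalar versions of Lemmas \ref{t_mp} and \ref{t_ire_ab} transfer cleanly to the cylindrical domain $D_{/L}$ (the test function $c\vb*{d}$ remains in the periodic $H^1_0$ space since $c$ vanishes on the physical boundary and is $L$-periodic in $x$); both are bookkeeping issues rather than conceptual ones.
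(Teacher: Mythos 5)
Your plan matches the paper's proof step for step: reduce to the scalar equation via $x$-periodicity, compare with the whole-space one-dimensional profile $\bar S$, push the boundary mismatch through the maximum principle (Lemma~\ref{t_mp}) and then through the interior $H^1$ estimate (Lemma~\ref{t_ire_ab}) with a one-variable cutoff, and finish by the triangle inequality using the exact identity $T^a_{\bar D,\Omega}-\bar T_\Omega = 2\sqrt a$ from Theorem~\ref{t_band_whole}. The paper factors the cutoff argument into a standalone lemma and uses the $C^{1,1}$ piecewise-quadratic profile (with $\abs{\phi''}\le 4/m^2$ on each side, so $\int\abs{\phi''}\le 8/m$), but that is presentation, not substance.

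One slip to correct: the boundary bound on $\bar S$. From Theorem~\ref{t_band_whole}, the explicit solution outside $\Omega$ has amplitude $\frac{2T}{\sqrt a(T+2\sqrt a)}$, and this prefactor is not bounded by $1$ in the regime of interest $a\to 0$ (it tends to $2$). The correct statement is
\[
\norm{\bar S}_{L^\infty(\partial D)} \le \frac{2T}{\sqrt a(T+2\sqrt a)}\exp\qty(-\frac{m}{\sqrt a}) \le \frac{2}{\sqrt a}\exp\qty(-\frac{m}{\sqrt a}),
\]
not $\frac{1}{\sqrt a}\exp(-m/\sqrt a)$. With the erroneous factor you cannot arrive at the stated constant $2\sqrt{L/m}$ through any admissible cutoff: plugging $M=\frac{1}{\sqrt a}e^{-m/\sqrt a}$ and $\int\abs{\phi''}\le 8/m$ into Lemma~\ref{t_ire_ab} yields $\sqrt{L/m}$, which is the wrong target. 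With the corrected $M=\frac{2}{\sqrt a}e^{-m/\sqrt a}$ and that same piecewise-quadratic cutoff the calibration closes exactly, so there is no extra optimization to do; the ``obstacle'' you anticipated dissolves once the prefactor is repaired. The technical points you flag (the scalar maximum principle and the cutoff test function $c\vb*{d}$ living in the periodic $H^1_0$ space) are indeed routine and handled silently in the paper.
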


\subsection{Proof of Theorem \ref{t_band_whole}}

For $\vb*{u} = (u^x, u^y) \in H^1((\mathbb{R}/L\mathbb{Z})\times\mathbb{R})^2$,
because of the periodicity in $x$ direction, we have
\[
\int_{\Omega_L} \operatorname{div}\vb*{u}
= \int_{\Omega_L} (u^x_x+u^y_y)
= \int_{\Omega_L} u^y_y.
\]
Thus, the solution $\vb*{s} = (s^x, s^y) \in H^1((\mathbb{R}/L\mathbb{Z})\times\mathbb{R})^2$ solves
\[
a\int_{\bar{D}_L} \nabla s^x\cdot\nabla u^x+\int_{\bar{D}_L\setminus\Omega_L} s^x u^x = 0
\quad \forall u^x \in H^1((\mathbb{R}/L\mathbb{Z})\times\mathbb{R})
\]
and
\[
a\int_{\bar{D}_L} \nabla s^y\cdot\nabla u^y+\int_{\bar{D}_L\setminus\Omega_L} s^y u^y = \int_{\Omega_L} u^y_y
\quad \forall u^y \in H^1((\mathbb{R}/L\mathbb{Z})\times\mathbb{R}).
\]
Since $s^x \in H^1((\mathbb{R}/L\mathbb{Z})\times\mathbb{R})$ is a weak solution of the homogeneous equation, it is identically zero in $\mathbb{R}\times\mathbb{R}$.
Therefore, we have $\vb*{s}(x, y) = (0, S(x, y))$ with the unique weak solution $S$ of the scalar-valued equation
\begin{equation}
\label{e_wf_band_whole_sca}
a\int_{\bar{D}_L} \nabla S\cdot\nabla u+\int_{\bar{D}_L\setminus\Omega_L} S u = \int_{\Omega_L} u_y
\quad \forall u \in H^1((\mathbb{R}/L\mathbb{Z})\times\mathbb{R}).
\end{equation}

Moreover, taking the solution $s$ of the one dimensional problem \eqref{e_wf_int_whole} and extending it to $D$ as $\tilde{S}(x, y) = s(y)$,
we obtain the another solution of the scalar-valued equation \eqref{e_wf_band_whole_sca}.
Since the solution is unique, we see that $S(x, y) = s(y)$.

The remains are to be shown from the one dimensional problem (Theorem \ref{t_int_whole}).

\subsection{Proof of Theorem \ref{t_band_gen}}

We first remark that by repeating the argument in the proof of Theorem \ref{t_band_whole}, the assertion (1) and (2) hold.

If we consider solving the equation \eqref{e_wf_band_whole} in the whole space
\[
\bar{D} = \mathbb{R}\times\mathbb{R}
\]
and then restricting it to the desired domain $D$.
As calculated in the previous section, the solution $\bar{\vb*{s}}$ is given by $\bar{\vb*{s}} = (0, \bar{S}(y))$ with
\[
\bar{S}(y) =
\begin{cases}
-\frac{2 T}{\sqrt{a}(T+2\sqrt{a})} \exp\left(\frac{y-f_l}{\sqrt{a}}\right) & \text{if $y \le f_l$,} \\
\frac{2 T}{\sqrt{a}(T+2\sqrt{a})} \exp\left(-\frac{y-f_r}{\sqrt{a}}\right) & \text{if $y \ge f_r$.} \\
\end{cases}
\]
Hence, we have
\[
\begin{aligned}
\norm{\bar{\vb*{s}}}_{L^\infty(\partial D)}
&= \max\{ \abs{\bar{S}(\max b_l)}, \abs{\bar{S}(\min b_r)} \} \\
&\le \frac{2 T}{\sqrt{a}(T+2\sqrt{a})} \exp\left(-\frac{m}{\sqrt{a}}\right)
\le \frac{2}{\sqrt{a}} \exp\left(-\frac{m}{\sqrt{a}}\right).
\end{aligned}
\]
Moreover, by the maximum principle (Lemma \ref{t_mp}), we see that
\[
\norm{\vb*{s}-\bar{\vb*{s}}}_{L^\infty(D)}
\le \norm{\vb*{s}-\bar{\vb*{s}}}_{L^\infty(\partial D)}
= \norm{\bar{\vb*{s}}}_{L^\infty(\partial D)}
\le \frac{2}{\sqrt{a}} \exp\left(-\frac{m}{\sqrt{a}}\right).
\]

We now claim the following interior $H^1$ estimate for band shape.

\begin{lemma}[Interior $H^1$ estimate for band shape]
The inequality
\[
\int_{\Omega_L} \abs{\nabla \vb*{d}}^2 \le 2 L\left(\frac{1}{f_l-\max b_l}+\frac{1}{\min b_r-f_r}\right) \norm{\vb*{d}}_{L^\infty(D)}^2
\]
holds.
\end{lemma}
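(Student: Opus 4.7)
\smallskip

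The plan is to apply the abstract interior $H^1$ estimate (Lemma~\ref{t_ire_ab}) with a cutoff function $c$ that depends only on $y$ and is $L$-periodic in $x$, so that the computation reduces to estimating $\int_{\mathbb{R}}\abs{c''(y)}\,dy$ and then multiplying by the period $L$.

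Specifically, I would take $c := \phi^2$, where $\phi(y)$ is the trapezoidal tent profile that vanishes for $y\le\max b_l$ and $y\ge\min b_r$, rises linearly from $0$ to $1$ on $[\max b_l, f_l]$, equals $1$ on $[f_l,f_r]$, and falls linearly from $1$ to $0$ on $[f_r,\min b_r]$. Then $c$ is compactly supported inside $D_{/L}$, is identically $1$ on $\Omega_L$, and its distributional Laplacian $c''$ splits into an absolutely continuous part $2(\phi')^2$ on the two transition strips together with two negative Dirac singularities at $y=f_l$ and $y=f_r$ arising from the jumps of $c'=2\phi\phi'$; note that the corners at $y=\max b_l$ and $y=\min b_r$ contribute no Dirac mass because $\phi$ vanishes there. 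A direct bookkeeping gives
\[
\int_{\mathbb{R}}\abs{c''(y)}\,dy
= \qty(\frac{2}{f_l-\max b_l}+\frac{2}{\min b_r-f_r})
+\qty(\frac{2}{f_l-\max b_l}+\frac{2}{\min b_r-f_r}),
\]
where the first pair comes from the absolutely continuous part and the second pair from the Dirac jumps.

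Because this piecewise-quadratic $c$ lies only in $W^{1,\infty}$, I would justify the use of Lemma~\ref{t_ire_ab} through a regularization: replace $\phi$ by $\phi_\varepsilon\in C^\infty$, smoothed inside narrow $\varepsilon$-layers around $y=f_l$ and $y=f_r$ (keeping $\phi_\varepsilon\equiv 1$ on $[f_l,f_r]$ and arranging $\phi_\varepsilon'$ to be monotone across each layer), so that $c_\varepsilon:=\phi_\varepsilon^2\in W^{2,\infty}_0(D_{/L})$ is admissible in the lemma and $\int\abs{c_\varepsilon''}\,dy$ converges to the total variation above. Using $\abs{\vb*{d}}^2\le\norm{\vb*{d}}_{L^\infty(D)}^2$, integrating over the periodic cell $x\in[0,L)$, and letting $\varepsilon\to 0$ then yields
\[
\int_{\Omega_L}\abs{\nabla\vb*{d}}^2
\le \frac{1}{2}\cdot L\cdot\qty(\frac{4}{f_l-\max b_l}+\frac{4}{\min b_r-f_r})\norm{\vb*{d}}_{L^\infty(D)}^2,
\]
which is precisely the claimed inequality.

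The main technical obstacle I anticipate is verifying the $L^1$-convergence of $\Delta c_\varepsilon$ to the total-variation measure of $\Delta c$ with the sharp constant: the mass $\int 2(\phi_\varepsilon')^2\,dy$ of the absolutely continuous part and the mass of the cross term $2\phi_\varepsilon\phi_\varepsilon''$ concentrated in the $\varepsilon$-layers must each tend to the corresponding limit without cancellation, which is most cleanly ensured by choosing the regularization so that $\phi_\varepsilon'$ is monotone across each transition layer and $\phi_\varepsilon''$ therefore does not change sign there.
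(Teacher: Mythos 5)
Your proposal is correct and yields the paper's constant, but it follows a genuinely different construction. The paper builds the cutoff $c(y)$ directly as a $C^{1,1}$ ``S-curve'' that is piecewise \emph{quadratic} across each transition strip of width $m$: convex on the first half, concave on the second, so that $c'$ is continuous, $|c''|=4/m^2$ pointwise, and $\int|c''|\,dy=4/m$ on that strip. This $c$ lies in $W^{2,\infty}_0$, so Lemma~\ref{t_ire_ab} applies verbatim with no limiting argument. Multiplying by $L$ and the factor $\tfrac12$ gives the stated bound immediately. You instead take the Caccioppoli-style choice $c=\phi^2$ with $\phi$ a \emph{linear} trapezoid; the resulting $c$ is only $W^{1,\infty}$, since $c'=2\phi\phi'$ jumps at $y=f_l$ and $y=f_r$, which forces you to interpret $\Delta c$ as a measure (two Dirac masses plus an absolutely continuous part) and then justify the lemma by a careful $\varepsilon$-regularization. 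Your bookkeeping of the total variation $4/m_1+4/m_2$ is right, and your observation that the monotonicity of $\phi_\varepsilon'$ across each $\varepsilon$-layer prevents cancellation between $2(\phi_\varepsilon')^2$ and $2\phi_\varepsilon\phi_\varepsilon''$ is exactly what is needed to make the limit work. So the argument closes, but at the cost of a nontrivial approximation step that the paper's construction avoids entirely: by choosing the cutoff to be $C^{1,1}$ from the start, the paper never leaves the class $W^{2,\infty}_0$ for which Lemma~\ref{t_ire_ab} is stated, and the estimate is a one-line consequence. Your route is the more standard reflex in elliptic regularity theory, while the paper's is leaner for this particular lemma; both arrive at $\int_{\mathbb{R}}|c''|\,dy=\tfrac{4}{f_l-\max b_l}+\tfrac{4}{\min b_r-f_r}$ and hence the same constant $2L\bigl(\tfrac{1}{f_l-\max b_l}+\tfrac{1}{\min b_r-f_r}\bigr)$.
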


\begin{proof}
Note that for $m > 0$ the function
\[
y =
\begin{cases}
0 & (x \le 0), \\
\frac{2}{m^2}x^2 & (0 \le x \le m/2), \\
1-\frac{2}{m^2}(m-x)^2 & (m/2 \le x \le m), \\
1 & (x \ge m), \\
\end{cases}
\]
is a $W^{2, \infty} = C^{1, 1}$ function which satisfies
\[
|y''| \le \frac{4}{m^2}.
\]
This gives us existence of a cutoff function $c \in W^{2, \infty}(\mathbb{R})$ that satisfies
\[
0 \le c \le 1,
\quad \text{$c = 1$ on $[f_l, f_r]$,}
\quad \text{$c = 0$ on $(-\infty, \max b_l]\cup[\min b_r, \infty)$,}
\]
\[
\text{$|c''| \le \frac{4}{(f_l-\max b_l)^2}$ in $(\max b_l, f_l)$,}
\quad \text{$|c''| \le \frac{4}{(\min b_r-f_r)^2}$ in $(f_r, \min b_r)$.}
\]
Therefore, applying Lemma \ref{t_ire_ab} with this cutoff function $c$, we see that is lemma holds true.
\end{proof}

By using this lemma, it follows that
\[
\int_{\Omega_L} \abs{\operatorname{div} (\vb*{s}-\bar{\vb*{s}})}^2
= \int_{\Omega_L} \abs{(S-\bar{S})_y}^2
\le \int_{\Omega_L} \abs{\nabla (\vb*{s}-\bar{\vb*{s}})}^2
\le \frac{4 L}{m} \left(\frac{2}{\sqrt{a}} \exp\left(-\frac{m}{\sqrt{a}}\right)\right)^2
\]
and hence
\[
\int_{\Omega_L} \abs{\frac{1}{T_{D, \Omega}^a}-\frac{1}{T_{\bar{D}, \Omega}^a}}^2
\le \frac{4 L}{m} \exp\left(-\frac{2 m}{\sqrt{a}}\right).
\]
Therefore, we finally obtain the estimate
\[
\begin{aligned}
\norm{\frac{1}{T_{D, \Omega}^a}-\frac{1}{\bar{T}_{\Omega}}}_{L^2(\Omega_L)}
&\le \norm{\frac{1}{T_{D, \Omega}^a}-\frac{1}{T_{\bar{D}, \Omega}^a}}_{L^2(\Omega_L)}+\norm{\frac{1}{T_{\bar{D}, \Omega}^a}-\frac{1}{\bar{T}_{\Omega}}}_{L^2(\Omega_L)} \\
&\le \frac{1}{T^2}\left(\int_{\Omega_L} \abs{T_{\bar{D}, \Omega}^a-\bar{T}_{\Omega}}^2\right)^{\frac{1}{2}}+\left(\int_{\Omega_L} \abs{\frac{1}{T_{D, \Omega}^a}-\frac{1}{T_{\bar{D}, \Omega}^a}}^2\right)^{\frac{1}{2}} \\
&\le 2\frac{\sqrt{\abs{\Omega_L}}}{T^2} \sqrt{a}+2\sqrt{\frac{L}{m}} \exp\left(-\frac{m}{\sqrt{a}}\right).
\end{aligned}
\]
This completes the proof of Theorem \ref{t_band_gen}.

\section{Annulus shape}

\subsection{The statements of the results}

\begin{theorem}[Annulus shape in the whole space]
\label{t_anu_whole}
Consider the situation
\[
\Omega := \{ (x, y) \in \mathbb{R}^2 \mid f_l^2 < x^2+y^2 < f_r^2 \} \subset \bar{D} := \mathbb{R}^2
\]
with $0 < f_l < f_r$.
Then, the followings hold.
\begin{enumerate}
\item
For each $a > 0$, the equation
\begin{equation}
\label{e_wf_annulus_whole}
a \int_D \nabla\vb*{s}:\nabla\vb*{u}+\int_{D\setminus\Omega} \vb*{s}\cdot\vb*{u} = \int_\Omega \operatorname{div}\vb*{u}
\quad \forall \vb*{u} \in H^1(D)^2.
\end{equation}
admits a unique solution $\vb*{s} \in H^1(D)^2$.
\item
The solution is of the form $\vb*{s} = (S(r)\cos\theta, S(r)\sin\theta)$,
where $S(r)$ is expressed in terms of the modified Bessel functions $I_n$ and $K_n$.
\item
The divergence $\operatorname{div}\vb*{s}$ is constant in the shape domain $\Omega$.
\item
The PDE-based thickness $T_{D, \Omega}^a := \frac{2}{\sqrt{a}\operatorname{div}\vb*{s}}$ converges to the thickness $\bar{T}_{\Omega} = f_r-f_l$ on $\Omega$ as $a \to 0$.
\item
Moreover, the estimate
\[
0
< \frac{3 f_r+f_l}{2 f_r}\sqrt{a}
\le T_{D, \Omega}^a-\bar{T}_{\Omega}
\le 2\frac{f_r}{f_l}\sqrt{a}
\]
holds for all $a > 0$.
\end{enumerate}
\end{theorem}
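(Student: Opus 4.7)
The plan is to exploit the rotational symmetry of the problem to reduce \eqref{e_wf_annulus_whole} to a scalar radial ODE, solve that ODE in closed form in each of the three regions $r<f_l$, $f_l<r<f_r$, $r>f_r$, and then derive the estimate in (5) from sharp two-sided inequalities for ratios of modified Bessel functions. The unique solvability (1) is the Lax--Milgram fact already established in the Preliminaries. Because the equation is $SO(2)$-invariant, uniqueness forces any solution into the form $\vb*{s}(r,\theta)=(S(r)\cos\theta,S(r)\sin\theta)$, with the radial profile $S$ solving
\begin{equation*}
-a\left(S''+\frac{S'}{r}-\frac{S}{r^2}\right)+(1-\chi_\Omega)S=-\partial_r\chi_\Omega \qquad\text{on }(0,\infty)
\end{equation*}
in the distributional sense (two Dirac masses on the right-hand side, at $r=f_l$ and $r=f_r$). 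Setting $\rho=r/\sqrt{a}$, the homogeneous equation outside $\Omega$ is the modified Bessel equation of order one, so $H^1$ integrability forces $S(r)=A\,I_1(r/\sqrt{a})$ on $r<f_l$ and $S(r)=B\,K_1(r/\sqrt{a})$ on $r>f_r$. Inside $\Omega$, the equation reduces to the Cauchy--Euler equation $S''+S'/r-S/r^2=0$, whose general solution is $S(r)=c_1 r+c_2/r$. This gives (2).

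The four constants $A,B,c_1,c_2$ are then pinned down by continuity of $S$ at $r=f_l,f_r$ together with the jump relations
\begin{equation*}
a\bigl(S'(f_l+0)-S'(f_l-0)\bigr)=1,\qquad a\bigl(S'(f_r-0)-S'(f_r+0)\bigr)=1,
\end{equation*}
obtained by integrating the distributional equation across each circle exactly as in Theorem \ref{t_int_whole}. Applying the Bessel identities $I_1'=I_0-I_1/\rho$ and $K_1'=-K_0-K_1/\rho$ to rewrite the jump conditions and eliminating $A,B,c_2$ is a short algebraic exercise; in $\Omega$ one finds $\operatorname{div}\vb*{s}=S'+S/r=2c_1$, which is manifestly constant (proving (3)), together with the explicit identity
\begin{equation*}
T_{D,\Omega}^a=2\sqrt{a}+\frac{f_r^2-f_l^2}{f_r R_r+f_l R_l},\qquad R_l:=\frac{I_1(f_l/\sqrt{a})}{I_0(f_l/\sqrt{a})},\ R_r:=\frac{K_1(f_r/\sqrt{a})}{K_0(f_r/\sqrt{a})}.
\end{equation*}
Since $R_l,R_r\to 1$ as $a\to 0$, assertion (4) follows immediately.

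The principal work lies in (5). Subtracting $\bar{T}_\Omega=f_r-f_l$ rearranges the above identity into
\begin{equation*}
T_{D,\Omega}^a-\bar{T}_\Omega=2\sqrt{a}+(f_r-f_l)\cdot\frac{f_l(1-R_l)+f_r(1-R_r)}{f_r R_r+f_l R_l},
\end{equation*}
and because $R_l<1<R_r$ the two summands in the numerator have opposite signs, so the bound cannot be obtained by estimating term by term. The main obstacle will be to establish sharp two-sided inequalities for $R_l(\rho)$ and $R_r(\rho)$, which I plan to derive from Tur\'an-type inequalities for modified Bessel functions and the classical continued-fraction representations of $I_1/I_0$ and $K_1/K_0$. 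Roughly, one needs pointwise bounds of the form
\begin{equation*}
1-\frac{1}{2\rho}\le R_l(\rho)<1<R_r(\rho)\le 1+\frac{1}{2\rho},
\end{equation*}
so that $f_l(1-R_l)$ and $f_r(R_r-1)$ both lie in $(0,\sqrt{a}/2]$, combined with the trivial lower bound $f_r R_r+f_l R_l\ge f_r>f_l$ for the denominator coming from $R_r>1$. Substituting these into the displayed identity and collecting terms then delivers the lower bound $(3f_r+f_l)/(2f_r)\sqrt{a}$ and the upper bound $2(f_r/f_l)\sqrt{a}$, completing (5).
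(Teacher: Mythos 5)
Your structural outline matches the paper's proof closely: the polar-coordinate reduction, the modified Bessel/Cauchy--Euler solution in each region, the two jump conditions, and the resulting formula for the thickness are all correct. Your explicit identity
\[
T_{D,\Omega}^a=2\sqrt{a}+\frac{f_r^2-f_l^2}{f_rR_r+f_lR_l},\qquad R_l=\frac{I_1(f_l/\sqrt{a})}{I_0(f_l/\sqrt{a})},\quad R_r=\frac{K_1(f_r/\sqrt{a})}{K_0(f_r/\sqrt{a})},
\]
is algebraically equivalent to the paper's expression for $p^*$ (divide their numerator and denominator by $I_0K_0$). Parts (1)--(4) are therefore fine.

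The gap is in the Bessel inequalities you invoke for (5). The claimed bound $R_l(\rho)\ge 1-\tfrac{1}{2\rho}$ is false: the large-$\rho$ asymptotics give $I_1(\rho)/I_0(\rho)=1-\tfrac{1}{2\rho}-\tfrac{1}{8\rho^2}+O(\rho^{-3})$, so $R_l$ lies \emph{below} $1-\tfrac{1}{2\rho}$, not above it. The inequality that Segura's bounds actually deliver (Proposition~\ref{t_besseli}) is $\tfrac{I_0-I_1}{I_1}\le\tfrac{2}{\rho}$, i.e.\ $1-R_l\le\tfrac{2}{\rho+2}$; the constant is $2$, not $\tfrac12$. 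This still saves your upper bound: with $f_l(1-R_l)\le 2\sqrt{a}$ and the denominator estimate $f_rR_r+f_lR_l\ge f_r$, you get $T^a_{D,\Omega}-\bar T_\Omega\le 2\sqrt{a}\,\tfrac{2f_r-f_l}{f_r}\le 2\tfrac{f_r}{f_l}\sqrt{a}$, using $(f_r-f_l)^2\ge0$; so the conclusion survives after the constant is corrected.

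The lower bound is more delicate. Your claimed $R_r-1\le\tfrac{1}{2\rho}$ is not what Proposition~\ref{t_besselk} gives: Segura bounds $\tfrac{K_1-K_0}{K_1}=1-\tfrac{1}{R_r}\le\tfrac{1}{2\rho}$, which is weaker than a bound on $R_r-1=\tfrac{K_1-K_0}{K_0}$. The clean way to make Segura bite is to sharpen the denominator estimate to $f_rR_r+f_lR_l\ge f_rR_r$ (not just $\ge f_r$); then the contribution of the bad term becomes
\[
(f_r-f_l)\,\frac{f_r(R_r-1)}{f_rR_r+f_lR_l}\le (f_r-f_l)\,\frac{R_r-1}{R_r}=(f_r-f_l)\,\frac{K_1-K_0}{K_1}\le(f_r-f_l)\frac{\sqrt{a}}{2f_r},
\]
which yields exactly the stated $\tfrac{3f_r+f_l}{2f_r}\sqrt{a}$. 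This is precisely the choice the paper makes (they drop the favourable term $f_l(I_0-I_1)K_0$ and keep $f_lI_1K_0$ out of the denominator for the lower bound, and symmetrically for the upper bound), so the bound on $R_r-1$ itself is never needed. In short: your decomposition of the thickness error is right, but the pointwise Bessel ratio estimates you quote are either false or insufficient, and your denominator estimate should track which ratio cancels --- $f_lR_l$ for the upper bound and $f_rR_r$ for the lower --- rather than using the uniform $\ge f_r$ for both.
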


\begin{theorem}[Annulus shape in general domain]
\label{t_anu_gen}
Consider the situation
\[
\Omega := \{ (x, y) \in \mathbb{R}^2 \mid f_l^2 < x^2+y^2 < f_r^2 \} \subset D
\]
with $0 < f_l < f_r < b_r := \sup\{ b > 0 \mid \{ (x, y) \in \mathbb{R}^2 \mid x^2+y^2 < b^2 \} \subset D \}$.
Then, the followings hold.
\begin{enumerate}
\item
For each $a > 0$, the equation
\begin{equation}
\label{e_wf_annulus_gen}
a \int_D \nabla\vb*{s}:\nabla\vb*{u}+\int_{D\setminus\Omega} \vb*{s}\cdot\vb*{u} = \int_\Omega \operatorname{div}\vb*{u}
\quad \forall \vb*{u} \in H_0^1(D)^2.
\end{equation}
admits a unique solution $\vb*{s} \in H_0^1(D)^2$.
\item
The PDE-based thickness $T_{D, \Omega}^a := \frac{2}{\sqrt{a}\operatorname{div}\vb*{s}}$ converges to the thickness $\bar{T}_{\Omega} = f_r-f_l$ a.e.\ on $\Omega$ as $a \to 0$.
\item
Moreover, the estimate
\[
\norm{\frac{1}{T_{D, \Omega}^a}-\frac{1}{\bar{T}_{\Omega}}}_{L^2(\Omega)}
\le 2\frac{f_r}{f_l}\frac{\sqrt{\abs{\Omega}}}{T^2} \sqrt{a}+2\sqrt{\pi}\frac{\sqrt{f_r}}{\sqrt{m}}\exp\left(-\frac{m}{\sqrt{a}}\right)
\]
holds for all $a > 0$.
Here, $T := \bar{T}_{\Omega} = f_r-f_l$, $\abs{\Omega}$ is the area of $\Omega$, i.e. $\abs{\Omega} := \pi(f_r^2-f_l^2)$ and $m := b_r-f_r > 0$.
\end{enumerate}
\end{theorem}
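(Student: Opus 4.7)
The plan follows the strategy of Theorem \ref{t_band_gen}: Lax--Milgram for existence and uniqueness, then comparison of the solution $\vb*{s}$ on $D$ with the whole-space solution $\bar{\vb*{s}}$ provided by Theorem \ref{t_anu_whole}, using the maximum principle (Lemma \ref{t_mp}) and the interior $H^1$ estimate (Lemma \ref{t_ire_ab}) to transfer the whole-space convergence to the general domain. Part (1) is immediate from the Lax--Milgram argument in the preliminaries. The triangle inequality
\[
\norm{\frac{1}{T_{D,\Omega}^a}-\frac{1}{\bar{T}_\Omega}}_{L^2(\Omega)}
\le \norm{\frac{1}{T_{D,\Omega}^a}-\frac{1}{T_{\bar{D},\Omega}^a}}_{L^2(\Omega)}
+\norm{\frac{1}{T_{\bar{D},\Omega}^a}-\frac{1}{\bar{T}_\Omega}}_{L^2(\Omega)}
\]
splits the task in two: the second term produces the algebraic contribution $2(f_r/f_l)\sqrt{\abs{\Omega}}\sqrt{a}/T^2$ directly from Theorem \ref{t_anu_whole}(5), so only the first term requires new work.

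To estimate the first term, set $\vb*{d} := \vb*{s}-\bar{\vb*{s}}|_D$, which solves the homogeneous equation \eqref{e_hswf} on $D$ with $\vb*{d}|_{\partial D} = -\bar{\vb*{s}}|_{\partial D}$ since $\vb*{s}=\vb*{0}$ on $\partial D$. Since $B(0,b_r) \subset D$, the boundary $\partial D$ lies in $\{r \ge b_r\}$, where $\abs{\bar{\vb*{s}}} = \abs{S(r)}$ decays like $K_1(r/\sqrt{a})$ by the explicit representation in Theorem \ref{t_anu_whole}(2). Applying the sharp Bessel-ratio inequalities foreshadowed in the abstract (in particular $K_1(z) \le \sqrt{\pi/(2z)}\,e^{-z}$) should yield a bound of the band-case type
\[
\norm{\bar{\vb*{s}}}_{L^\infty(\partial D)} \le \frac{2}{\sqrt{a}}\exp\left(-\frac{m}{\sqrt{a}}\right),
\]
which the maximum principle (Lemma \ref{t_mp}) propagates to $\norm{\vb*{d}}_{L^\infty(D)}$.

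Next, I would apply Lemma \ref{t_ire_ab} with a radial cutoff $c(r) \in W^{2,\infty}_0(D)$ equal to $1$ on $\overline{B(0,f_r)}$ (hence on $\Omega$) and vanishing on $\{r \ge b_r\}$, with a quadratic transition on $[f_r,b_r]$ of width $m$. Standard constructions give $\abs{c''} \le 4/m^2$ and $\abs{c'} \le 2/m$; the radial Laplacian $\Delta c = c''(r) + c'(r)/r$ integrated against $2\pi r\,dr$ over the transition annulus produces $\int_D \abs{\Delta c}$ of order $f_r/m$. Combining with the $L^\infty$ bound on $\vb*{d}$, the inequality $\abs{\operatorname{div}(\vb*{s}-\bar{\vb*{s}})} \le \abs{\nabla \vb*{d}}$, and the factor $\sqrt{a}/2$ from the definition of $1/T_{D,\Omega}^a$, yields the exponential term $2\sqrt{\pi f_r/m}\exp(-m/\sqrt{a})$. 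Part (2) (a.e.\ convergence) then follows from the $L^2$ bound by sending $a \to 0$ and extracting a subsequence.

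The main obstacle will be the sharp-constant $L^\infty$ bound on $\bar{\vb*{s}}|_{\partial D}$. Unlike the band case, where the whole-space solution is a plain exponential with easily controlled amplitude, the annulus solution is determined by a coupled $2\times 2$ linear system for the coefficients of $I_1(r/\sqrt{a})$ on $\{r<f_l\}$ and $K_1(r/\sqrt{a})$ on $\{r>f_r\}$ via jump conditions at $r=f_l$ and $r=f_r$. Recovering the clean prefactor $2/\sqrt{a}$—rather than a weaker bound involving additional $\sqrt{a}$ or $f_r/f_l$ factors—requires the sharp Bessel-ratio inequalities announced in the abstract; the remaining geometric bookkeeping of the factor $1/r$ in $\Delta c$ and the weight $2\pi r$ is routine once those estimates are in hand.
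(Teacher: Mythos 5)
Your high-level plan — split via the triangle inequality, control $\vb*{s}-\bar{\vb*{s}}$ with the maximum principle and an interior $H^1$ estimate, then close with the explicit whole-space estimate of Theorem~\ref{t_anu_whole}(5) — is exactly the paper's strategy. But the proposal contains a gap in the quantitative bookkeeping that, as written, would fail to produce the stated constant $2\sqrt{\pi}\sqrt{f_r/m}$ in the exponential term.

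The substantive error is in the target $L^\infty$ bound. You assert $\norm{\bar{\vb*{s}}}_{L^\infty(\partial D)}\le \frac{2}{\sqrt{a}}\exp(-m/\sqrt{a})$, ``of the band-case type''. In the band case the far-field solution is a plain exponential and this is fine. In the annulus case the far-field solution is $\bar S(r)=D\,K_1(r/\sqrt{a})$, and when you compare $\bar S(b_r)$ to the amplitude set at $r=f_r$ the relevant quantity is the \emph{ratio} $K_1(b_r/\sqrt{a})/K_1(f_r/\sqrt{a})$, for which the correct bound (established in the paper by showing $\bigl(\sqrt{x}e^x K_1(x)\bigr)'\le 0$) is $\le\sqrt{f_r/b_r}\,\exp(-m/\sqrt{a})$. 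The algebraic prefactor $\sqrt{f_r/b_r}<1$ is not a cosmetic improvement; it is essential. If you drop it, the $L^\infty$ bound carries an unnecessary factor of $\sqrt{b_r/f_r}$ which does not cancel anywhere downstream. Compounding this, your estimate ``$\int_D|\Delta c|$ of order $f_r/m$'' for the naive quadratic cutoff is off: integrating $|c''|\le 4/m^2$ against $2\pi r\,dr$ over the transition annulus gives $\sim 8\pi b_r/m$, i.e.\ order $b_r/m$, \emph{not} $f_r/m$. Under the correct accounting, the two quantities $\sqrt{f_r/b_r}$ (from the Bessel ratio) and $\sqrt{b_r/m}$ (from the cutoff integral) must be multiplied to yield $\sqrt{f_r/m}$; with your stated bounds ($2/\sqrt{a}$ and $\sqrt{f_r/m}$) the product is instead of order $\sqrt{b_r/m}$, which can be arbitrarily larger than the claimed $\sqrt{f_r/m}$ when $b_r\gg f_r$.

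Two smaller points. First, $|\operatorname{div}\vb*{v}|\le|\nabla\vb*{v}|$ is false in general; the correct inequality in dimension $N$ is $|\operatorname{div}\vb*{v}|^2\le N|\nabla\vb*{v}|^2$, and the paper carries the resulting factor $2$ explicitly. Second, the paper does not use the naive quadratic cutoff but a radially symmetric $c$ with $\Delta c=\pm K$ piecewise constant (constructed from $\frac{K}{4}r^2+k_1\log r+k_2$), leading to the sharper constant $\int_D|\Delta c|=\pi(b_r^2-f_r^2)K\le 8\pi(b_r^2+f_r^2)/(b_r^2-f_r^2)$. The naive cutoff you propose is in fact workable — after the corrections above it even yields a slightly better constant $\sqrt{2\pi f_r/m}$ — but only once the Bessel-ratio prefactor $\sqrt{f_r/b_r}$ is correctly incorporated. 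In short: the strategy is right, but the two intermediate estimates you write down are each wrong in a way that only appears to cancel because you anticipated the final answer.
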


\subsection{Proof of Theorem \ref{t_anu_whole}}

By the polar coordinate transformation
\[
(x, y) = \Phi(r, \theta) = (r \cos \theta, r \sin \theta)
\]
we discuss the domain 
\[
\tilde{\Omega} := (f_l, f_r)\times [0, 2\pi)
\subset \tilde{D} := (0, \infty)\times [0, 2\pi)
\]
on the $r \theta$ plane.

Calculating the Jacobian matrix, we have
\[
D\Phi
= \begin{pmatrix}x_r & x_\theta \\ y_r & y_\theta\end{pmatrix}
= \begin{pmatrix}\cos\theta & -r\sin\theta \\ \sin\theta & r\cos\theta\end{pmatrix}
= R(\theta)\begin{pmatrix}1 & 0 \\ 0 & r\end{pmatrix},
\]
where $R(\theta) := \begin{pmatrix}\cos\theta & -\sin\theta \\ \sin\theta & \cos\theta\end{pmatrix}$ is the rotation matrix.
The inverse is
\[
A
:= D\Phi^{-1}
= \frac{1}{r}\begin{pmatrix}r & 0 \\ 0 & 1\end{pmatrix}R(-\theta),
\]
\[
\det A^{-1} = r,
\]
\[
A A^T = \frac{1}{r^2}\begin{pmatrix}r^2 & 0 \\ 0 & 1\end{pmatrix}.
\]
For $\vb*{u}(x, y) = \vb*{u}(\Phi(r, \theta)) = \vb*{U}(r, \theta)$, we have
\[
\nabla_{(x, y)}\vb*{u} = \nabla_{(r, \theta)}\vb*{U} A.
\]
Let $U^r$ and $U^\theta$ be a normal and tangential component of the vector field $\vb*{U}$ in the polar coordinate system, respectively,
that is,
\[
\vb*{U} = U^r\begin{pmatrix}\cos\theta \\ \sin\theta\end{pmatrix}+U^\theta\begin{pmatrix}-\sin\theta \\ \cos\theta\end{pmatrix}
= R(\theta)\begin{pmatrix}U^r \\ U^\theta\end{pmatrix}.
\]
Then, the partial derivatives are
\[
\vb*{U}_r = R(\theta)\begin{pmatrix}U^r_r \\ U^\theta_r\end{pmatrix},
\quad \vb*{U}_\theta = R(\theta)\begin{pmatrix}U^r_\theta \\ U^\theta_\theta\end{pmatrix}+R'(\theta)\begin{pmatrix}U^r \\ U^\theta\end{pmatrix}.
\]
Let $R := R(\frac{\pi}{2}) = \begin{pmatrix}0 & -1 \\ 1 & 0\end{pmatrix}$, then note that $R'(\theta) = R(\theta+\frac{\pi}{2}) = R(\theta)R$.
From here,
\[
\begin{aligned}
\operatorname{div}\vb*{u}
= \tr \nabla \vb*{U} A
&= \frac{1}{r} \tr \begin{pmatrix}r & 0 \\ 0 & 1\end{pmatrix}R(-\theta)\left[R(\theta)\begin{pmatrix}U^r_r & U^r_\theta \\ U^\theta_r & U^\theta_\theta\end{pmatrix}+R'(\theta)\begin{pmatrix}0 & U^r \\ 0 & U^\theta\end{pmatrix}\right] \\
&= \frac{1}{r} \tr \begin{pmatrix}r & 0 \\ 0 & 1\end{pmatrix}\left[\begin{pmatrix}U^r_r & U^r_\theta \\ U^\theta_r & U^\theta_\theta\end{pmatrix}+R\begin{pmatrix}0 & U^r \\ 0 & U^\theta\end{pmatrix}\right] \\
&= \frac{1}{r} (r U^r_r+U^\theta_\theta+U^r),
\end{aligned}
\]
\[
\int_\Omega \operatorname{div}\vb*{u}
= \int_{\tilde{\Omega}} \tr \nabla \vb*{U} A \det A^{-1}
= \int_{\tilde{\Omega}} (r U^r_r+U^\theta_\theta+U^r)
= \int_{\tilde{\Omega}} (r U^r_r+U^r).
\]
Here, we have used the periodicity in $\theta$ direction.
Moreover,
\[
\begin{aligned}
\int_D \nabla \vb*{s}:\nabla \vb*{u}
&= \int_{\tilde{D}} \nabla \vb*{S} A:\nabla \vb*{U} A \det A^{-1}
= \int_{\tilde{D}} \nabla \vb*{S}:\nabla \vb*{U} A A^T \det A^{-1} \\
&= \int_{\tilde{D}} \frac{1}{r} \begin{pmatrix}\vb*{S}_r & \vb*{S}_\theta\end{pmatrix}:\begin{pmatrix}\vb*{U}_r & \vb*{U}_\theta\end{pmatrix}\begin{pmatrix}r^2 & 0 \\ 0 & 1\end{pmatrix}
= \int_{\tilde{D}} \frac{1}{r} (r^2\vb*{S}_r\cdot\vb*{U}_r+\vb*{S}_\theta\cdot\vb*{U}_\theta) \\
&= \int_{\tilde{D}} \left[r\begin{pmatrix}S^r_r \\ S^\theta_r\end{pmatrix}\cdot\begin{pmatrix}U^r_r \\ U^\theta_r\end{pmatrix}+\frac{1}{r}\begin{pmatrix}S^r_\theta-S^\theta \\ S^\theta_\theta+S^r\end{pmatrix}\cdot\begin{pmatrix}U^r_\theta-U^\theta \\ U^\theta_\theta+U^r\end{pmatrix}\right].
\end{aligned}
\]
Therefore, the weak form \eqref{e_wf_annulus_whole} becomes
\begin{equation}
\label{e_wf_annulus_whole_polar}
\begin{aligned}
&a \int_{\tilde{D}} \left[r\begin{pmatrix}S^r_r \\ S^\theta_r\end{pmatrix}\cdot\begin{pmatrix}U^r_r \\ U^\theta_r\end{pmatrix}+\frac{1}{r}\begin{pmatrix}S^r_\theta-S^\theta \\ S^\theta_\theta+S^r\end{pmatrix}\cdot\begin{pmatrix}U^r_\theta-U^\theta \\ U^\theta_\theta+U^r\end{pmatrix}\right]
+\int_{\tilde{D}\setminus\tilde{\Omega}} r\begin{pmatrix}S^r \\ S^\theta\end{pmatrix}\cdot\begin{pmatrix}U^r \\ U^\theta\end{pmatrix} \\
&= \int_{\tilde{\Omega}} (r U^r_r+U^r)
\quad \forall (U^r, U^\theta) \in H_\Phi.
\end{aligned}
\end{equation}
Here, $(U^r, U^\theta) \in H_\Phi$ means that $U^r$ and $U^\theta$ are $2\pi$-periodic functions satisfying
\[
\norm{(U^r, U^\theta)}_{H_\Phi, a}^2
:= a \int_{\tilde{D}} \left[r\begin{vmatrix}U^r_r \\ U^\theta_r\end{vmatrix}^2+\frac{1}{r}\begin{vmatrix}U^r_\theta-U^\theta \\ U^\theta_\theta+U^r\end{vmatrix}^2\right]+\int_{\tilde{D}} r\begin{vmatrix}U^r \\ U^\theta\end{vmatrix}^2 < \infty.
\]

From the equation \eqref{e_wf_annulus_whole_polar}, we would like to show that $S^\theta \equiv 0$ and $S^r_\theta \equiv 0$ are true.
If this is the case, then $S^r = S^r(r)$ is a solution to
\begin{equation}
\label{e_wf_annulus_whole_radial}
a \int_{\tilde{D}} \left[r S^r_r U^r_r +\frac{1}{r} S^r U^r\right]
+\int_{\tilde{D}\setminus\tilde{\Omega}} r S^r U^r
= \int_{\tilde{\Omega}} (r U^r_r+U^r)
\quad \forall U^r \in \tilde{H},
\end{equation}
where $\tilde{H}$ is the space of $2\pi$-periodic functions $U^r$ satisfying
\[
\norm{U^r}_{\tilde{H}, a}^2
:= a \int_{\tilde{D}} \left[r \abs{U^r_r}^2+\frac{1}{r}\abs{U^r}^2\right]+\int_{\tilde{D}} r\abs{U^r}^2 < \infty.
\]
Note that the solution $S^r \in \tilde{H}$ to the equation \eqref{e_wf_annulus_whole_radial} exists since Lax-Milgram theorem is available.
We now consider $(S^r, S^\theta) = (S^r(r), 0)$.
Then, one can easily check the conditions $(S^r, S^\theta) \in H_\Phi$ and \eqref{e_wf_annulus_whole_polar}
since
\[
\int_{\tilde{D}} \frac{1}{r} S^r U^\theta_\theta = 0
\]
thanks to the periodicity in $\theta$ direction.
Thus, $(S^r(r), 0)$ is a solution, and due to the uniqueness of the solutions of \eqref{e_wf_annulus_whole_polar}, we have $S^\theta = 0$ and $S^r_\theta = 0$.

Therefore, what remains is to analyze the weak form $S = S^r(r)$ satisfies
\[
a \int_0^\infty \left[r S_r U_r+\frac{1}{r} S U\right]+\left(\int_0^\infty-\int_{f_l}^{f_r}\right) r S U
= \int_{f_l}^{f_r} (r U^r_r+U^r)
\quad \forall U \in H,
\]
where $H$ is the space of functions $U$ satisfying
\[
\sqrt{r}U(r), \frac{1}{\sqrt{r}}U(r), \sqrt{r}U_r(r) \in L^2(0, \infty).
\]
Here, if we convert $u(r) = r U(r)$, we get
\[
a \int_0^\infty \left[S_r u_r-\frac{1}{r}S_r u+\frac{1}{r^2} S u\right]+\left(\int_0^\infty-\int_{f_l}^{f_r}\right) S u
= \int_{f_l}^{f_r} u_r
= [u]_{r = f_l}^{f_r}.
\]

This can be written in the strong form as
\[
-S_{r r}-\frac{1}{r} S_r+\frac{1}{r^2} S+\frac{1}{a}(1-\chi (r)) S = \frac{1}{a} \chi' (r)
\]
and in particular, in the interior of the void domain $(0, f_l) \cup (f_r, \infty)$, it is the modified Bessel equation
\[
S_{r r}+\frac{1}{r} S_r-\left(\frac{1}{a}+\frac{1}{r^2}\right)S = 0.
\]
Therefore, using the first kind of modified Bessel function $I_n (x)$ and the second kind of modified Bessel function $K_n (x)$,
we have
\[
S(r) = C I_1\left(\frac{r}{\sqrt{a}}\right) \quad \text{in $(0, f_l)$,}
\quad S(r) = D K_1\left(\frac{r}{\sqrt{a}}\right) \quad \text{in $(f_r, \infty)$.}
\]
Here, $C$ and $D$ are constants to be determined later.
We refer the readers to \cite{r_as64} for the details of the modified Bessel functions.

Also, set
\[
p(r) := \frac{1}{r}(r S_r+S).
\]
Then, we can rewrite the weak form as
\[
a \int_0^\infty p u_r+\left(\int_0^\infty-\int_{f_l}^{f_r}\right) S u = \int_{f_l}^{f_r} u_r
\]
In particular, for $u \in H_0^1(f_l, f_r)$, since $U(r) = \frac{1}{r}u(r) \in H$, we have
\[
\int_{f_l}^{f_r} p u_r = 0
\]
and hence $p$ is a constant function on $(f_l, f_r)$.
Setting this value as $p^*$,
we have
\[
S(r) = \frac{1}{2} p^* r+\frac{E}{r},
\]
where $E$ is a constant.
From this,
\[
p^*
= 2\frac{f_r S(f_r)-f_l S(f_l)}{f_r^2-f_l^2}
\]
can be obtained.

Furthermore, $p(r) = S_r(r)+\frac{1}{r}S(r)$ is given by
\[
p(r) = \frac{C}{\sqrt{a}} I_0\left(\frac{r}{\sqrt{a}}\right) \quad \text{in $(0, f_l)$,}
\quad p(r) = -\frac{D}{\sqrt{a}} K_0\left(\frac{r}{\sqrt{a}}\right) \quad \text{in $(f_r, \infty)$.}
\]
Here, we have used the relations $I_1'(x) = I_0(x)-\frac{1}{x}I_1(x)$ and $K_1'(x) = -K_0(x)-\frac{1}{x}K_1(x)$ introduced in \cite[9.6.26]{r_as64}.
We also have
\[
p(r) = p^* = 2\frac{D f_r K_1 (\frac{f_r}{\sqrt{a}})-C f_l I_1 (\frac{f_l}{\sqrt{a}})}{f_r^2-f_l^2} \quad \text{in $(f_l, f_r)$.}
\]

All that remains is the equation that should be satisfied at $r = f_l, f_r$.
This is given by
\[
a(p^*-p(f_l-0)) = 1,
\quad a(p^*-p(f_r+0)) = 1.
\]
So we have the simultaneous linear equations
\[
\begin{pmatrix}
k(f_r+f_l)I_0+2 f_l I_1 & -2 f_r K_1 \\
-2 f_l I_1 & k(f_r+f_l)K_0+2 f_r K_1 \\
\end{pmatrix}
\begin{pmatrix}C \\ D\end{pmatrix}
= \frac{k(f_r+f_l)}{\sqrt{a}}\begin{pmatrix}-1 \\ 1\end{pmatrix},
\]
where $k = \frac{T}{\sqrt{a}} = \frac{f_r-f_l}{\sqrt{a}}$.
We also have suppressed the argument of the modified Bessel functions for simplicity,
that is, $I_n := I_n(\frac{f_l}{\sqrt{a}})$ and $K_n := K_n(\frac{f_r}{\sqrt{a}})$.
Solving this, the determinant is
\[
d = k^2 (f_r+f_l)^2 I_0 K_0+2 k(f_r+f_l)(f_l I_1 K_0+f_r I_0 K_1)
\]
and
\[
\begin{aligned}
\begin{pmatrix}C \\ D\end{pmatrix}
&= \frac{1}{d}
\begin{pmatrix}
k(f_r+f_l)K_0+2 f_r K_1 & 2 f_r K_1 \\
2 f_l I_1 & k(f_r+f_l)I_0+2 f_l I_1 \\
\end{pmatrix}
\frac{k(f_r+f_l)}{\sqrt{a}} \begin{pmatrix}-1 \\ 1\end{pmatrix} \\
&= \frac{k^2 (f_r+f_l)^2}{\sqrt{a} d}\begin{pmatrix}-K_0 \\ I_0\end{pmatrix}.
\end{aligned}
\]
From here
\[
\begin{aligned}
p^*
&= 2 \frac{D f_r K_1-C f_l I_1}{f_r^2-f_l^2} \\
&= \frac{2}{\sqrt{a}} \frac{k(f_r I_0 K_1+f_l I_1 K_0)}{k(f_r+f_l)I_0 K_0+2(f_l I_1 K_0+f_r I_0 K_1)} \frac{1}{f_r-f_l}.
\end{aligned}
\]

Now, we claim the convergence
\[
\frac{1}{T_{D, \Omega}^a}
= \frac{\sqrt{a} \operatorname{div}\vb*{s}}{2}
= \frac{\sqrt{a} p^*}{2}
\to \frac{1}{f_r-f_l}
= \frac{1}{\bar{T}_{\Omega}}.
\]
Indeed, it is easy to see that
\[
\begin{aligned}
T_{D, \Omega}^a-\bar{T}_{\Omega}
&= \left(\frac{2}{k}+\frac{(f_r+f_l)I_0 K_0}{f_r I_0 K_1+f_l I_1 K_0}-1\right)T \\
&= 2\sqrt{a}+\frac{f_r I_0 (K_0-K_1)+f_l (I_0-I_1) K_0}{f_r I_0 K_1+f_l I_1 K_0} T.
\end{aligned}
\]
Since the inequalities $I_0 (x) \ge I_1 (x) > 0$ and $0 < K_0 (x) \le K_1 (x)$ hold for $x > 0$ in general,
we have
\[
2\sqrt{a}+\frac{K_0-K_1}{K_1} T
\le T_{D, \Omega}^a-\bar{T}_{\Omega}
\le 2\sqrt{a}+\frac{I_0-I_1}{I_1} T.
\]

In order to estimate the left- and right-hand sides, we recall the following inequalities for the modified Bessel functions of Segura-type \cite{r_s23}.

\begin{proposition}[{\cite[Theorem 1 with $\nu = 1/2$]{r_s11}}]
\label{t_besselk}
For the second kind of modified Bessel functions $K_n$, the inequality
\[
\frac{K_0(x)}{K_1(x)} \ge \frac{x}{\frac{1}{2}+\sqrt{\frac{1}{4}+x^2}}
\]
holds for any $x > 0$.
\end{proposition}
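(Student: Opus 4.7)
The plan is to reduce the inequality to a comparison argument for a scalar Riccati equation. I would first introduce the ratio $\rho(x) := K_0(x)/K_1(x)$ and combine the standard identities $K_0'(x) = -K_1(x)$ and $K_1'(x) = -K_0(x) - K_1(x)/x$ to derive
\[
\rho'(x) = \rho(x)^2 + \frac{\rho(x)}{x} - 1.
\]
A short algebraic check shows that the candidate lower bound
\[
f(x) := \frac{x}{\frac{1}{2}+\sqrt{\frac{1}{4}+x^2}}
\]
is exactly the positive root of $\rho^2 + \rho/x - 1 = 0$. Hence the right-hand side of the Riccati equation vanishes at $\rho = f$, is strictly positive for $\rho > f$, and is strictly negative for $0 < \rho < f$. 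Implicit differentiation of this algebraic relation also gives $f'(x) > 0$ on $(0,\infty)$ with $f(x) \to 1$ as $x \to \infty$, so $f < 1$ throughout.

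Next I would pin down the boundary behaviour of $\rho$. The classical small-argument asymptotics $K_0(x) \sim -\log(x/2) - \gamma$ and $K_1(x) \sim 1/x$ give $\rho(x) \sim -x\log(x/2)$ as $x \to 0^+$, so $\rho(x)/x \to \infty$ while $f(x)/x \to 1$; in particular $\rho > f$ on some right neighbourhood of the origin. The Hankel-type asymptotics of $K_0$ and $K_1$ at infinity yield $\rho(x) \to 1$.

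I would then conclude by contradiction. If $\rho(x_0) \le f(x_0)$ at some $x_0 > 0$, continuity together with the boundary behaviour above produces a smallest crossing $x_1 \in (0,x_0]$ with $\rho(x_1) = f(x_1)$ and $\rho > f$ on $(0,x_1)$. At $x_1$ the Riccati equation gives $\rho'(x_1) = 0$, while $f'(x_1) > 0$, so $(\rho - f)'(x_1) = -f'(x_1) < 0$ and $\rho < f$ immediately to the right of $x_1$. Wherever $\rho < f$ holds (with $\rho > 0$, which is automatic since $K_\nu > 0$), the Riccati equation forces $\rho' < 0$ while $f' > 0$; hence $\rho - f$ is strictly decreasing on all of $(x_1,\infty)$ and remains negative, so $\rho(x) \le f(x_1) < 1$ for every $x > x_1$, contradicting $\rho(x) \to 1$. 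Therefore $\rho \ge f$ on $(0,\infty)$, which is the claimed bound. The main obstacle is precisely this last step: upgrading the pointwise sign information coming from the Riccati equation to a global lower bound requires ruling out a second, upward recrossing, and this is exactly what the joint monotonicity $\rho' < 0 < f'$ inside the region $\{\rho < f\}$ is designed to prevent.
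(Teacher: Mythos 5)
Your proof is correct. Note, though, that the paper does not itself prove this proposition: it is quoted directly as Segura's Theorem~1 (2011) with $\nu = 1/2$, and the paper relies on that reference rather than giving an argument. What you have supplied is a self-contained proof, and it reconstructs the essential mechanism behind Segura's bound. The reduction to the Riccati equation $\rho' = \rho^2 + \rho/x - 1$ for $\rho = K_0/K_1$ (from $K_0' = -K_1$ and $K_1' = -K_0 - K_1/x$, the same Abramowitz--Stegun 9.6.26 identities the paper uses elsewhere) is right, your identification of $f(x) = x/(\tfrac12 + \sqrt{\tfrac14 + x^2})$ as the positive nullcline of that equation is right, and the trapping step is sound: if $\rho$ ever falls to $f$ at some $x_1$, then $\rho'(x_1) = 0 < f'(x_1)$, and on the region $\{\rho < f\}$ one has $\rho' < 0 < f'$, so no upward recrossing is possible and $\rho$ is pinned strictly below $f(x_1) < 1$, contradicting $\rho(x) \to 1$. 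This is precisely the ``characteristic root of the Riccati equation'' technique of the cited reference, so while your argument is self-contained it is not a genuinely different route from the source. For comparison, the cheaper route via the Tur\'an-type inequality $K_1^2 < K_0 K_2$ together with $K_2 = K_0 + \tfrac{2}{x}K_1$ gives only the weaker bound $\rho(x) \ge \bigl(\sqrt{1+x^2} - 1\bigr)/x$, which is not sufficient for the constant needed in \eqref{e_besselk}; the full Riccati comparison you carried out is indeed required for the sharp form.
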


\begin{proposition}[{\cite[Theorem 8 with $\nu = 1$ and $\lambda = 1/2$]{r_s23}}]
\label{t_besseli}
For the first kind of modified Bessel functions $I_n$, the inequality
\[
\frac{I_0(x)}{I_1(x)} \le \frac{\frac{1}{2}+\sqrt{\frac{9}{4}+x^2}}{x}
\]
holds for any $x > 0$.
\end{proposition}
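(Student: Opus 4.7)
The plan is to reformulate the inequality as a first-order comparison problem via the Riccati equation satisfied by the ratio $R(x) := I_0(x)/I_1(x)$, and then rule out any contact of $R$ with the proposed upper bound
\[
g(x) := \frac{\frac{1}{2}+\sqrt{\frac{9}{4}+x^2}}{x}
\]
by a direct sign analysis at a hypothetical first contact point.

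First, from the standard identities $I_0'(x) = I_1(x)$ and $I_1'(x) = I_0(x)-I_1(x)/x$ (see \cite{r_as64}), the quotient rule gives the Riccati equation
\[
R'(x) = 1 + \frac{R(x)}{x} - R(x)^2, \quad x > 0.
\]
Squaring the identity $xg(x) - \tfrac{1}{2} = \sqrt{\tfrac{9}{4}+x^2}$ yields the purely algebraic relation $g(x)^2 - g(x)/x - 1 = 2/x^2$. Combining this with the Riccati equation forces $R'(x_0) = -2/x_0^2$ at any putative contact point $x_0 > 0$ where $R(x_0) = g(x_0)$, while implicit differentiation of the algebraic identity expresses $g'(x_0)$ in closed form in terms of $g(x_0)$ and $x_0$.

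The argument then proceeds by contradiction. Two-term Taylor expansions near the origin give $R(x) = 2/x + x/4 + O(x^3)$ and $g(x) = 2/x + x/3 + O(x^3)$, so $R(x) < g(x)$ on a neighborhood of $x = 0$. If the bound failed somewhere, continuity would produce a smallest $x_0 > 0$ with $R(x_0) = g(x_0)$, and a left-derivative computation would force $(R-g)'(x_0) \ge 0$. After clearing denominators, the opposite inequality $R'(x_0) < g'(x_0)$ reduces to the strict bound $x_0 g(x_0) > 2$, which holds for every $x_0 > 0$ because $x_0 g(x_0) = \tfrac{1}{2} + \sqrt{\tfrac{9}{4} + x_0^2} > 2$. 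The resulting contradiction yields $R(x) \le g(x)$ for all $x > 0$.

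The main obstacle is identifying $x_0 g(x_0) = 2$ as the sharp threshold that makes the comparison close: because the leading asymptotics at the origin match ($R, g \sim 2/x$), a bare leading-order expansion does not even establish the initial strict inequality, so the two-term expansion is essential; moreover, the specific constants $\tfrac{1}{2}$ and $\tfrac{9}{4}$ inside $g$ must be exactly those determined by the algebraic identity $g^2 - g/x - 1 = 2/x^2$ for the sign analysis at a hypothetical crossing to go through. Once these elementary calculations are verified, the remainder is a standard nonlinear comparison for a Riccati-type equation.
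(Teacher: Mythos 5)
The paper does not prove this Proposition: it is quoted verbatim from Segura's 2023 paper \cite{r_s23} (Theorem~8 with $\nu=1$, $\lambda=1/2$). The Remark following Theorem~\ref{t_anu_whole} offers an alternative only for the weaker downstream inequality $\frac{I_0(x)-I_1(x)}{I_1(x)}\le\frac{2}{x}$, which is what the theorem actually uses, by rewriting it via $I_2=I_0-\tfrac{2}{x}I_1$ as $I_2\le I_1$ and citing \cite{r_n74}. Your argument is therefore a genuinely different, self-contained proof of the full Proposition, and it is correct. The Riccati equation $R'=1+R/x-R^2$ for $R=I_0/I_1$ follows from $I_0'=I_1$ and $I_1'=I_0-I_1/x$; the algebraic identity $g^2-g/x-1=2/x^2$ (from squaring $xg-\tfrac12=\sqrt{\tfrac94+x^2}$) pins down $R'(x_0)=-2/x_0^2$ at any contact point; differentiating $g$ gives $g'(x_0)=-\frac{4+x_0g(x_0)}{x_0^2\,(2x_0g(x_0)-1)}$, and clearing the positive factor $x_0^2(2x_0g(x_0)-1)$ reduces $R'(x_0)<g'(x_0)$ to $x_0g(x_0)>2$, which holds strictly since $x_0g(x_0)=\tfrac12+\sqrt{\tfrac94+x_0^2}>2$ for all $x_0>0$; and the two-term expansions $R=\tfrac2x+\tfrac{x}{4}+O(x^3)$, $g=\tfrac2x+\tfrac{x}{3}+O(x^3)$ correctly establish the strict initial ordering $R<g$, which the matching leading terms alone cannot. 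The first-contact argument then closes the proof. What the citation buys the authors is brevity and access to Segura's sharper parametrized family of bounds; what your proof buys is self-containment and transparency about why the constants $\tfrac12$ and $\tfrac94$ are precisely those that make the Riccati comparison close.
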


It follows from Proposition \ref{t_besselk} that
\begin{equation}
\label{e_besselk}
\frac{K_0(x)-K_1(x)}{K_1(x)}
\ge \frac{x}{\frac{1}{2}+\sqrt{\frac{1}{4}+x^2}}-1
= \frac{\sqrt{\frac{1}{4}+x^2}-\frac{1}{2}}{x}-1
\ge -\frac{1}{2 x}
\end{equation}
and hence
\[
\begin{aligned}
T_{D, \Omega}^a-\bar{T}_{\Omega}
&\ge 2\sqrt{a}+\frac{K_0\left(\frac{f_r}{\sqrt{a}}\right)-K_1\left(\frac{f_r}{\sqrt{a}}\right)}{K_1\left(\frac{f_r}{\sqrt{a}}\right)}(f_r-f_l) \\
&\ge 2\sqrt{a}-\frac{f_r-f_l}{2 f_r}\sqrt{a}
= \frac{3 f_r+f_l}{2 f_r}\sqrt{a}
> 0.
\end{aligned}
\]

It follows from Proposition \ref{t_besseli} that
\begin{equation}
\label{e_besseli}
\frac{I_0(x)-I_1(x)}{I_1(x)}
\le \frac{\frac{1}{2}+\sqrt{\frac{9}{4}+x^2}}{x}-1
\le \frac{2}{x}
\end{equation}
and hence
\[
\begin{aligned}
T_{D, \Omega}^a-\bar{T}_{\Omega}
&\le 2\sqrt{a}+\frac{I_0\left(\frac{f_l}{\sqrt{a}}\right)-I_1\left(\frac{f_l}{\sqrt{a}}\right)}{I_1\left(\frac{f_l}{\sqrt{a}}\right)}(f_r-f_l) \\
&\le 2\sqrt{a}+\frac{2(f_r-f_l)}{f_l}\sqrt{a}
= 2\frac{f_r}{f_l}\sqrt{a}.
\end{aligned}
\]

Therefore, we have proved all the statements in Theorem \ref{t_anu_whole}.

\begin{remark}
There is another proof of \eqref{e_besseli} without using Proposition \ref{t_besseli}.
Indeed, according to \cite[9.6.26]{r_as64}, we have
\[
I_2(x) = I_0(x)-\frac{2}{x}I_1(x).
\]
So the inequality to be shown is $\frac{I_2(x)}{I_1(x)} \le 1$.
This is known to be true by \cite{r_n74} and the references cited in it.
\end{remark}

\subsection{Proof of Theorem \ref{t_anu_gen}}

We consider solving the equation in the whole space
\[
\bar{D} = \mathbb{R}^2
\]
and then restricting it to the desired domain $D$.
As calculated in the previous subsection, the solution $\bar{\vb*{s}}$ is given by $\bar{\vb*{s}} = (\bar{S}(r)\cos\theta, \bar{S}(r)\sin\theta)$ with
\[
\bar{S}(r)
= D K_1\left(\frac{r}{\sqrt{a}}\right)
= \frac{1}{\sqrt{a}} \frac{k(f_r+f_l) I_0 K_1\left(\frac{r}{\sqrt{a}}\right)}{k (f_r+f_l) I_0 K_0+2 (f_l I_1 K_0+f_r I_0 K_1)}.
\]
for $r \ge b_r$ with $I_n := I_n(\frac{f_l}{\sqrt{a}})$ and $K_n := K_n(\frac{f_r}{\sqrt{a}})$.
Since $K_1$ is monotonically decreasing,
\[
\norm{\bar{\vb*{s}}}_{L^\infty(\partial D)}
= \bar{S}(b_r)
\le \frac{1}{\sqrt{a}} \frac{k(f_r+f_l) I_0 K_1\left(\frac{b_r}{\sqrt{a}}\right)}{k (f_r+f_l) I_0 K_0+2 (f_l I_1 K_0+f_r I_0 K_1)}.
\]
Set the right-hand side of this to $M_a$.
By the maximum principle (Lemma \ref{t_mp}), we see that
\[
\norm{\vb*{s}-\bar{\vb*{s}}}_{L^\infty(D)}
\le \norm{\vb*{s}-\bar{\vb*{s}}}_{L^\infty(\partial D)}
= \norm{\bar{\vb*{s}}}_{L^\infty(\partial D)}
\le M_a.
\]

The interior $H^1$ estimate to be used in this section is as follows.

\begin{lemma}[Interior $H^1$ estimate for annulus shape]
The inequality
\[
\int_\Omega \abs*{\nabla \vb*{d}}^2 \le \frac{4\pi(b_r^2+f_r^2)}{b_r^2-f_r^2} \norm{\vb*{d}}_{L^\infty(D)}^2.
\]
holds.
\end{lemma}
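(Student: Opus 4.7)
My plan is to mirror the strategy used for the band shape: construct a radial cutoff function in $W^{2,\infty}_0(D)$ that equals $1$ on $\Omega$, apply the abstract interior $H^1$ estimate of Lemma \ref{t_ire_ab}, and bound $\abs{\vb*{d}}^2$ pointwise by $\norm{\vb*{d}}_{L^\infty(D)}^2$. The crucial new feature is that, unlike the band case, I will parametrize the cutoff in $\rho := r^2$ rather than in $r$, because this choice absorbs the $r$-weight of the area element $dA = 2\pi r\,dr = \pi\,d\rho$ into the parametrization and produces the clean identity $\Delta c(r) = 4 h'(\rho) + 4\rho\, h''(\rho)$ for any radial $c(r) = h(r^2)$.

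Concretely, since the definition of $b_r$ yields $\{x^2+y^2 < b_r^2\} \subset D$, I can place the entire support of the cutoff inside this open disk. Setting $\delta' := b_r^2 - f_r^2$, I take the piecewise quadratic spline in $\rho$ that is the exact analogue of the cutoff used for the band shape: $h \equiv 1$ on $[0, f_r^2]$, $h \equiv 0$ on $[b_r^2, \infty)$, and on the transition interval $h$ interpolates quadratically on $[f_r^2, f_r^2 + \delta'/2]$ and $[f_r^2 + \delta'/2, b_r^2]$ so that $h$ and $h'$ are continuous at every knot, with $h'$ vanishing at the two endpoints, $\abs{h'} \le 2/\delta'$, and $\abs{h''} \le 4/\delta'^2$. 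Then $c(r) := h(r^2)$ lies in $W^{2,\infty}_0(D)$, is non-negative, and equals $1$ on $\Omega$.

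With $c$ in hand, I will reduce $\int_D \abs{\Delta c}\,dA$ to the one-dimensional integral $\pi \int_{f_r^2}^{b_r^2} \abs{4 h'(\rho) + 4\rho\, h''(\rho)}\,d\rho$. On each of the two subintervals $h''$ is a nonzero constant, and an easy check shows that the linear function $4h' + 4\rho h''$ has a definite sign there, so the absolute value can be removed and the resulting polynomial integrated exactly. This yields $\int_D \abs{\Delta c}\,dA = \frac{8\pi(b_r^2+f_r^2)}{b_r^2 - f_r^2}$, and inserting this into Lemma \ref{t_ire_ab} together with the trivial bound $\abs{\vb*{d}}^2 \le \norm{\vb*{d}}_{L^\infty(D)}^2$ gives the stated inequality.

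The main obstacle is identifying the correct parametrization: a quadratic spline in $r$, the naive analogue of the band-case cutoff, only yields a weaker bound of order $\tfrac{b_r}{b_r - f_r}$. The $\rho$-based spline is what allows the $r$-weight from the area element to be swept into the parametrization, so that the curvature-induced term $c'(r)/r$ in the radial Laplacian contributes in just the right way to produce the symmetric ratio $\frac{b_r^2+f_r^2}{b_r^2-f_r^2}$ appearing in the statement.
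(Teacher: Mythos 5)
Your construction is correct and yields exactly the stated constant, but it is a genuinely different cutoff than the one in the paper. The paper instead chooses $c$ so that $\Delta c$ is piecewise \emph{constant} in $r$ (equal to $-K$ on an inner annulus and $+K$ on an outer one), which forces the form $c(r)=\pm\tfrac{K}{4}r^2+k_i\log r+k_j$; matching $c$ and $c_r$ at the three radii $f_r$, $p$, $b_r$ determines $p^2=\tfrac{f_r^2+b_r^2}{2}$ and $K=\dfrac{2}{f_r^2\log f_r+b_r^2\log b_r-2p^2\log p}$. Then $\int_D\abs{\Delta c}=\pi(b_r^2-f_r^2)K$, and the constant $K$ is estimated via the convexity of $x\mapsto x\log x$ (using the refinement $\tfrac{f(u)+f(v)}{2}-f(\tfrac{u+v}{2})>\tfrac18 f''(\tfrac{u+v}{2})(v-u)^2$) to give $K\le\tfrac{8(b_r^2+f_r^2)}{(b_r^2-f_r^2)^2}$, which matches your bound.

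Each approach has a virtue. The paper's choice is the "optimal" radial cutoff in the sense of making $\Delta c$ constant and yields a strictly smaller $K$ than the clean constant finally quoted, but the construction requires solving a small linear system with logarithmic basis functions and then an auxiliary convexity inequality. Your $\rho=r^2$ reparametrization is more elementary: the quadratic spline in $\rho$ makes the area element $dA=\pi\,d\rho$ transparent, the formula $\Delta c=4h'(\rho)+4\rho h''(\rho)$ is immediate, and the sign-definiteness of $\Delta c$ on each subinterval (using $2\rho-\alpha\ge\alpha>0$ and $2\rho-\beta\ge\alpha>0$ with $\alpha=f_r^2$, $\beta=b_r^2$) lets you integrate exactly to get $\int_D\abs{\Delta c}\,dA=\tfrac{8\pi(b_r^2+f_r^2)}{b_r^2-f_r^2}$ with no auxiliary estimate. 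Your key observation that a spline in $r$ would only give a weaker $\tfrac{b_r}{b_r-f_r}$-type bound, while a spline in $\rho$ lands exactly on the symmetric ratio, is precisely the point; both routes reach the same stated constant.
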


\begin{proof}
Let $f := f_r$ and $b := b_r$ for simplicity.
Construct a function $c \in C^{1, 1}(\mathbb{R}^2)$ satisfying
\[
\begin{cases}
c(x, y) = 1 & \text{for $\sqrt{x^2+y^2} \le f$,} \\
\Delta c(x, y) = -K & \text{for $f < \sqrt{x^2+y^2} < p$,} \\
\Delta c(x, y) = K & \text{for $p < \sqrt{x^2+y^2} < b$,} \\
c(x, y) = 0 & \text{for $\sqrt{x^2+y^2} \ge b$,} \\
\end{cases}
\]
for some constant $K \ge 0$ and a point $p \in [f, b]$
to apply Lemma \ref{t_ire_ab}.
Note apparently that such $c$ is radially symmetric.
Since $\Delta c = c_{r r}+\frac{1}{r}c_r$ with $r = \sqrt{x^2+y^2}$
and the general solution of the ordinary differential equation $c_{r r}+\frac{1}{r}c_r = K$ is given by $c(r) = \frac{K}{4}r^2+k_1\log r+k_2$,
we have $c$ is of the form
\[
c(r) =
\begin{cases}
1 & \text{for $r \le f$,} \\
-\frac{K}{4}r^2+k_1\log r+k_2 & \text{for $f \le r \le p$,} \\
\frac{K}{4}r^2+k_3\log r+k_4 & \text{for $p \le r \le b$,} \\
0 & \text{for $r \ge b$} \\
\end{cases}
\]
with some constants $k_1, k_2, k_3, k_4$.
Since $c(r)$ and its derivative $c_r(r)$ are continuous,
we have the boundary conditions
\[
-\frac{K}{4}f^2+k_1\log f+k_2 = 1,
\quad -\frac{K}{2}f+\frac{k_1}{f} = 0,
\]
\[
\frac{K}{4}b^2+k_3\log b+k_4 = 0,
\quad \frac{K}{2}b+\frac{k_3}{b} = 0,
\]
\[
-\frac{K}{4}p^2+k_1\log p+k_2 = \frac{K}{4}p^2+k_3\log p+k_4,
\quad -\frac{K}{2}p+\frac{k_1}{p} = \frac{K}{2}p+\frac{k_3}{p}.
\]
Solving them, we obtain
\[
k_1 = \frac{K}{2}f^2,
\quad k_2 = 1+\frac{K}{4}f^2-\frac{K}{2}f^2\log f,
\]
\[
k_3 = -\frac{K}{2}b^2,
\quad k_4 = -\frac{K}{4}b^2+\frac{K}{2}b^2\log b.
\]
\[
p^2 = \frac{f^2+b^2}{2},
\quad K = \frac{2}{f^2\log f+b^2\log b-2 p^2\log p}.
\]
Therefore, we can construct a function $c$ with the constant $K$ as above
and obtain the estimate
\[
\int_\Omega \abs*{\nabla \vb*{d}}^2 \le \frac{1}{2}\pi(b^2-f^2) K\norm{\vb*{d}}_{L^\infty(D)}^2.
\]
Now, let us estimate $K$ by a simple expression.
Since the function $f(x) = x \log x$ is convex for $x > 0$ and moreover it satisfies
\[
f''(x) = \frac{1}{x} > 0,
\quad f''''(x) = \frac{2}{x^3} > 0,
\]
we have the inequality of convexity
\[
\frac{f(x)+f(y)}{2}-f\left(\frac{x+y}{2}\right)
> \frac{1}{8}f''\left(\frac{x+y}{2}\right)(y-x)^2.
\]
Thus, we obtain
\[
K \le \frac{8(b^2+f^2)}{(b^2-f^2)^2}
\]
and the desired estimate of this lemma.
\end{proof}

By using this lemma, it follows that
\[
\int_\Omega \abs*{\operatorname{div} (\vb*{s}-\bar{\vb*{s}})}^2
\le 2\int_\Omega \abs*{\nabla (\vb*{s}-\bar{\vb*{s}})}^2
\le 2\frac{4\pi (b_r^2+f_r^2)}{b_r^2-f_r^2} M_a^2.
\]
To summarize,
\[
\int_\Omega \abs{\frac{1}{T_{D, \Omega}^a}-\frac{1}{T_{\bar{D}, \Omega}^a}}^2
\le \frac{2\pi (b_r^2+f_r^2)}{b_r^2-f_r^2} a M_a^2.
\]

Now, let us estimate $\sqrt{a}M_a$.
We prepare the following lemma on the ratio of modified Bessel functions, which is a slight modification of the argument in \cite{r_ln10}.

\begin{lemma}
We have
\[
(\sqrt{x}\exp(x)K_1(x))' \le 0
\]
for all $x > 0$.
In particular, the inequality
\[
\frac{K_1(y)}{K_1(x)} \le \sqrt{\frac{x}{y}}\exp(-(y-x))
\]
holds for all $0 < x \le y$.
\end{lemma}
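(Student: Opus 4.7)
The plan is to differentiate $f(x) := \sqrt{x}\,e^{x}K_1(x)$ directly, express the sign of $f'(x)$ in terms of the ratio $K_0(x)/K_1(x)$, and then invoke Proposition \ref{t_besselk} to conclude.

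First I would compute $f'(x)$ using the standard recurrence $K_1'(x) = -K_0(x)-\tfrac{1}{x}K_1(x)$ from \cite[9.6.26]{r_as64}:
\[
f'(x) = e^{x}\sqrt{x}\left(K_1(x)\Bigl(1-\frac{1}{2x}\Bigr)-K_0(x)\right).
\]
Since $e^{x}\sqrt{x} > 0$ and $K_1(x) > 0$, the inequality $f'(x) \le 0$ is equivalent to
\[
\frac{K_0(x)}{K_1(x)} \ge 1-\frac{1}{2x} \qquad (x>0).
\]

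Next I would compare this required bound to the sharp lower bound provided by Proposition \ref{t_besselk}. After rationalising,
\[
\frac{K_0(x)}{K_1(x)} \ge \frac{x}{\tfrac{1}{2}+\sqrt{\tfrac{1}{4}+x^{2}}} = \frac{\sqrt{\tfrac{1}{4}+x^{2}}-\tfrac{1}{2}}{x},
\]
so it suffices to verify
\[
\frac{\sqrt{\tfrac{1}{4}+x^{2}}-\tfrac{1}{2}}{x} \ge \frac{2x-1}{2x},
\]
which reduces after multiplication by $2x>0$ to $\sqrt{\tfrac{1}{4}+x^{2}} \ge x$, an obvious inequality. This establishes the monotonicity statement $(\sqrt{x}\,e^{x}K_1(x))'\le 0$.

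Finally, the ratio bound follows by an immediate integration: since $f$ is non-increasing on $(0,\infty)$, for $0 < x \le y$ we have $\sqrt{y}\,e^{y}K_1(y) \le \sqrt{x}\,e^{x}K_1(x)$, which after division by $\sqrt{y}\,e^{y}K_1(x)$ gives the claimed estimate $K_1(y)/K_1(x) \le \sqrt{x/y}\,\exp(-(y-x))$. The only delicate step is the reduction to the algebraic inequality $\sqrt{\tfrac{1}{4}+x^{2}}\ge x$; this is the point where Proposition \ref{t_besselk} is used with some slack (the bound $1-\tfrac{1}{2x}$ is strictly weaker than the Segura bound), so no additional refinement of \cite{r_s11} is needed. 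I expect no other obstacle.
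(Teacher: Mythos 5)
Your proposal is correct and follows essentially the same route as the paper: differentiate $\sqrt{x}\,e^{x}K_1(x)$ using $K_1'(x)=-K_0(x)-\tfrac{1}{x}K_1(x)$, reduce the sign condition to $K_0(x)/K_1(x)\ge 1-\tfrac{1}{2x}$, and deduce that bound from Proposition~\ref{t_besselk}. The paper simply cites its already-derived display \eqref{e_besselk}, which is the identical inequality $\tfrac{K_0-K_1}{K_1}\ge -\tfrac{1}{2x}$ that you re-establish inline.
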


\begin{proof}
Direct calculation shows
\[
\begin{aligned}
(\sqrt{x}\exp(x)K_1(x))'
&= \frac{1}{2\sqrt{x}}\exp(x)[(2 x+1)K_1(x)+2 x K_1'(x)] \\
&= \frac{1}{2\sqrt{x}}\exp(x)[(2 x-1)K_1(x)-2 x K_0(x)].
\end{aligned}
\]
It follows from the inequality \eqref{e_besselk} that
\[
(\sqrt{x}\exp(x)K_1(x))'
\le \frac{1}{2\sqrt{x}}\exp(x)\left[(2 x-1)-2\left(x-\frac{1}{2}\right)\right] K_1(x) \\
= 0.
\]
We obtain the desired result.
\end{proof}

In view of this inequality,
we have
\[
\begin{aligned}
\sqrt{a} M_a
&= \frac{k(f_r+f_l) I_0 K_1\left(\frac{b_r}{\sqrt{a}}\right)}{k (f_r+f_l) I_0 K_0+2 (f_l I_1 K_0+f_r I_0 K_1)} \\
&\le \frac{k(f_r+f_l) I_0 K_1}{k (f_r+f_l) I_0 K_0+2 (f_l I_1 K_0+f_r I_0 K_1)} \sqrt{\frac{f_r}{b_r}}\exp\left(-\frac{b_r-f_r}{\sqrt{a}}\right).
\end{aligned}
\]
We claim that
\[
C_a
:= \frac{k(f_r+f_l) I_0 K_1}{k (f_r+f_l) I_0 K_0+2 (f_l I_1 K_0+f_r I_0 K_1)}
\le 1.
\]
Indeed, dropping the term of $I_1 K_0 \ge 0$ and recalling $k = \frac{f_r-f_l}{\sqrt{a}} > 0$, we have
\[
C_a
\le \frac{k(f_r+f_l) I_0 K_1}{k (f_r+f_l) I_0 K_0+2 f_r I_0 K_1}
= \frac{1}{\frac{K_0\left(\frac{f_r}{\sqrt{a}}\right)}{K_1\left(\frac{f_r}{\sqrt{a}}\right)}+\frac{2 f_r}{f_r^2-f_l^2}\sqrt{a}}.
\]
Then, the inequality \eqref{e_besselk} shows
\[
C_a
\le \frac{1}{1-\frac{\sqrt{a}}{2 f_r}+\frac{2 f_r}{f_r^2-f_l^2}\sqrt{a}}
= \frac{1}{1+\frac{3 f_r^2+f_l^2}{2 f_r(f_r^2-f_l^2)}\sqrt{a}}
\le 1.
\]

Therefore, we have
\[
\sqrt{a} M_a
\le \sqrt{\frac{f_r}{b_r}}\exp\left(-\frac{b_r-f_r}{\sqrt{a}}\right)
\]
and it finally follows that:
\[
\begin{aligned}
\norm{\frac{1}{T_{D, \Omega}^a}-\frac{1}{\bar{T}_{\Omega}}}_{L^2(\Omega)}
&\le \norm{\frac{1}{T_{D, \Omega}^a}-\frac{1}{T_{\mathbb{R}^2, \Omega}^a}}_{L^2(\Omega)}+\norm{\frac{1}{T_{\mathbb{R}^2, \Omega}^a}-\frac{1}{\bar{T}_{\Omega}}}_{L^2(\Omega)} \\
&\le \frac{1}{T^2}\left(\int_\Omega \abs{T_{\mathbb{R}^2, \Omega}^a-\bar{T}_{\Omega}}^2\right)^{\frac{1}{2}}+\left(\int_\Omega \abs{\frac{1}{T_{D, \Omega}^a}-\frac{1}{T_{\mathbb{R}^2, \Omega}^a}}^2\right)^{\frac{1}{2}} \\
&\le 2\frac{f_r}{f_l}\frac{\sqrt{\abs{\Omega}}}{T^2} \sqrt{a}+\sqrt{\frac{2\pi(b_r^2+f_r^2)}{b_r^2-f_r^2}}\sqrt{\frac{f_r}{b_r}}\exp\left(-\frac{b_r-f_r}{\sqrt{a}}\right).
\end{aligned}
\]
Since
\[
\sqrt{\frac{b_r^2+f_r^2}{b_r+f_r}}
\le \sqrt{b_r+f_r}
\le \sqrt{2 b_r},
\]
we can conclude that the estimate in Theorem \ref{t_anu_gen} holds.

\section*{Acknowledgment}
The first author is partially supported by JSPS KAKENHI Grant Number 25K17275.
The second author is partially supported by JSPS KAKENHI Grant Number JP23H03800.

\end{document}